    \newtheoremstyle{upright}%
        {1pt plus1pt minus1pt}%
        {1pt plus1pt minus1pt}%
        {\upshape}%
        {}%
        {\bfseries\scshape}%
        {\textbf{.}}%
        {1em}%
        {}%
\newtheorem{theorem}[]{Theorem}
\newtheorem{conjecture}[theorem]{Conjecture}
\newtheorem{corollary}[theorem]{Corollary}
\newtheorem{lemma}[theorem]{Lemma}
\newtheorem{proposition}[theorem]{Proposition}
\theoremstyle{definition}
\newtheorem{definition}[theorem]{Definition}
\newtheorem{example}[theorem]{Example}
\newtheorem{notation}[theorem]{Notation}
\newtheorem{remark}[theorem]{Remark}
\renewenvironment{proof}[1][Proof]{\textbf{#1.} }{\ \rule{0.5em}{0.5em}}
\newcommand{\ks}{K\!S}
\newcommand{\sksi}[1]{\mathcal{S}(\ks_{#1})}
\newcommand{\sks}{\mathcal{S}(\ks)}
\newcommand{\omegaksi}[1]{\Omega(\ks_{#1})}
\newcommand{\omegaks}{\Omega(\ks)}
\newcommand{\ksi}[1]{\ks_{#1}}
\newcommand{\xoo}{x_0^0}
\newcommand{\xio}[1]{x_{#1}^0}
\newcommand{\yio}[1]{y_{#1}^0}
\newcommand{\xii}[2]{x_{#1}^{#2}}
\newcommand{\orbitksi}[1]{\mathscr{O}_{#1}(\xio{#1},\theta_{#1}^0)}
\newcommand{\footprintksi}[1]{\mathcal{F}_{#1}(\xio{#1},\theta_{#1}^0)}
\newcommand{\compseq}{\{\mathscr{O}_n(\xio{n},\theta^0)\}_{n=0}^\infty}
\newcommand{\compseqi}[1]{\{\mathscr{O}_n(\xio{n},\theta^0)\}_{n=#1}^\infty}
\newcommand{\compseqiang}[2]{\{\mathscr{O}_n(\xio{n},#2)\}_{n=#1}^\infty}
\newcommand{\compseqbar}{\{\mathscr{O}_n(\overline{\xio{n}},\overline{\theta^0})\}_{n=0}^\infty}
\newcommand{\orbitksiang}[2]{\mathscr{O}_{#1}(\xio{#1},#2)}
\newcommand{\orbitksixang}[3]{\mathscr{O}_{#1}(#2,#3)}
\newcommand{\cantor}{\mathscr{C}}
\newcommand{\tern}[2]{[#1,#2]}
\begin{document}



\title[Properties of hybrid orbits]{Sequences of compatible periodic hybrid orbits of prefractal Koch snowflake billiards}

\author[M. L. Lapidus]{Michel L. Lapidus}
\address{University of California, Riverside, 900 Big Springs Rd.\ Riverside, CA  92521, USA}
\email{lapidus@math.ucr.edu}
\thanks{The work of M. L. Lapidus was partially supported by the National Science Foundation under the research grants DMS-0707524 and DMS-1107750.}
\author[R. G. Niemeyer]{Robert G. Niemeyer}
\address{University of California, Riverside, 900 Big Springs Rd.\ Riverside, CA  92521, USA}
\email{niemeyer@math.ucr.edu}

\begin{abstract}
The Koch snowflake $\ks$ is a nowhere differentiable curve.  The billiard table $\omegaks$ with boundary $\ks$ is, a priori, not well defined.  That is, one cannot a priori determine the minimal path traversed by a billiard ball subject to a collision in the boundary of the table.  It is this problem which makes $\omegaks$ such an interesting, yet difficult, table to analyze.


In this paper, we approach this problem by approximating (from the inside) $\omegaks$ by well-defined (prefractal) rational polygonal billiard tables $\omegaksi{n}$.  We first show that the flat surface $\sksi{n}$ determined from the rational billiard $\omegaksi{n}$ is a branched cover of the singly punctured hexagonal torus.  Such a result, when combined with the results of \cite{Gut2}, allows us to define a sequence of compatible orbits of prefractal billiards $\omegaksi{n}$.  Using a particular addressing system, we define a hybrid orbit of a prefractal billiard $\omegaksi{n}$ and show that every dense orbit of a prefractal billiard $\omegaksi{n}$ is a dense hybrid orbit of $\omegaksi{n}$.  This result is key in obtaining a topological dichotomy for a sequence of compatible orbits.  Furthermore, we determine a sufficient condition for a sequence of compatible orbits to be a sequence of compatible \textit{periodic hybrid} orbits.

We then examine the limiting behavior of a sequence of compatible periodic hybrid orbits.  We show that the trivial limit of particular (eventually) constant sequences of compatible hybrid orbits constitutes an orbit of $\omegaks$. In addition, we show that the union of two suitably chosen nontrivial polygonal paths connects two elusive limit points of the Koch snowflake.  We conjecture that such a path is indeed the subset of what will eventually be an orbit of the Koch snowflake fractal billiard, once an appropriate `fractal law of reflection' is determined.

Finally, we close with a discussion of several open problems and potential directions for further research.  We discuss how it may be possible for our results to be generalized to other fractal billiard tables and how understanding the structures of the Veech groups of the prefractal billiards may help in determining `\textit{fractal flat surfaces}' naturally associated with the billiard flows.
\end{abstract}
\maketitle

\section{Introduction}
\label{ch:introduction}


The Koch snowflake curve $\ks$, the construction of which is depicted in Figure \ref{fig:kochconstruction75intro}, is everywhere nondifferentiable.  The absence of a well-defined tangent at any point of $\ks$ is what, a priori, prevents one from determining a billiard flow on the billiard $\omegaks$ with boundary $\ks$.  Indeed, since every point of $\ks$ is apparently a singularity of the billiard flow, one cannot a priori find a minimal path traversed by a billiard ball subject to a collision in the boundary. Our search for a solution to this problem will be, in part, motivated by the discussion on experimental results given in our earlier paper \cite{LapNie1}.

For each $n=0,1,2,...$, the prefractal $\ksi{n}$ is the $n$th (inner) polygonal approximation to $\ks$, and defines a rational polygonal billiard table $\omegaksi{n}$; that is, a polygon whose interior angles are all rational multiples of $\pi$. (See Figure \ref{fig:kochconstruction75intro}.)  Since the theory of rational polygonal billiards is very well developed (see, e.g., \cite{GaStVo,Gut1,GutJu1,GutJu2,HuSc,HaKa,KaZe,Mas,MasTa,Ta1,Ta2,Ve1,Ve2,Vo,Zo}), it is natural to want to define the dynamics on the fractal ``billiard table'' $\omegaks$ in terms of the dynamics on its prefractal approximations $\omegaksi{n}$.  The focus of this paper is then to build a foundation on which we can begin to investigate the nature of orbits of the Koch snowflake billiard $\omegaks$. We next describe the contents of this paper and outline our main results.

In order for the results of \S\S\ref{ch:TheKochSnowflakePrefractalFlatSurfaceSKSi}--\ref{sec:concludingRemarks} to be accessible to a broader audience, we provide in \S\ref{sec:BackgroundAndPreliminaries} a brief treatment of the necessary topics from the theory of mathematical billiards and particular examples from fractal geometry.  In connection with mathematical billiards, we also give a brief description of how a flat surface can be used to rigorously relate the billiard flow on the billiard $\Omega(B)$, where $B$ is a rational polygon, to the geodesic flow on the corresponding flat surface $\mathcal{S}(B)$.


\begin{figure}
	\begin{center}
\includegraphics{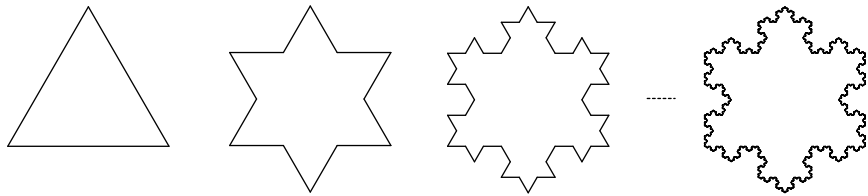}
	\end{center}
	\caption[The Koch snowflake and prefractal approximations]{The Koch snowflake curve $\ks$ and its prefractal approximations $\ksi{n}$, for $n=0,1,2,\cdots$. As is well known, $\ks$ is a fractal, nowhere differentiable and closed curve, of infinite length and enclosing a finite area.  Furthermore, it is self-similar; more precisely, it is the union of three abutting self-similar sets, each an isometric copy of the classic Koch curve.  (See, e.g., \cite{Fa}.) }
	\label{fig:kochconstruction75intro}
\end{figure}



The main results of the paper are presented in \S\S\ref{ch:TheKochSnowflakePrefractalFlatSurfaceSKSi}--\ref{sec:nontrivialPolygonalPaths}.  \S \ref{ch:TheKochSnowflakePrefractalFlatSurfaceSKSi} contains results on the prefractal flat surface $\sksi{n}$, for any arbitrary $n\geq 0$, and consequences of the fact (also established in \S3, see Theorem \ref{thm:ksnBranchedCoverOfks0}) that such a surface is a branched cover of the hexagonal torus $\sksi{0}$. Most importantly, we show that the billiard flow on the rational polygonal billiard $\omegaksi{n}$ is dynamically equivalent to the geodesic flow on the associated prefractal flat surface $\sksi{n}$.  Such a result allows us to deduce that the set of directions for which a billiard orbit is closed (resp., dense) in $\omegaksi{0}$ is exactly the set of directions for which an orbit is closed (resp., dense) in $\omegaksi{n}$, for any $n\geq 0$. (See Theorem \ref{thm:gutkinsResultAppliedToPrefractals} and Corollary \ref{cor:gutkinsResultAppliedToPrefractals}.)

This fact is used extensively in \S\ref{ch:TheKochSnowflakePrefractalBilliard} when describing orbits of $\omegaksi{n}$, $n\geq 0$, and constructing what we call a  \textit{sequence of compatible orbits}. Such a sequence consists of orbits of each of the billiards $\omegaksi{n}$, for $n=0,1,2,...$, with initial conditions that are themselves compatible in a suitable manner; see Definitions \ref{def:compatibleInitialConditions}, \ref{def:sequenceOfCompatibleInitialConditions} and \ref{def:SequenceOfCompatibleOrbits}.  Using an addressing system developed in \S\ref{subsec:SymbolicRepresentationOfTheTernaryCantorSet}, we define what we are calling \textit{hybrid orbits} of $\omegaksi{n}$; see Definition \ref{def:hybridOrbit}.  In a very concrete sense, hybrid orbits are orbits that survive the construction of $\omegaksi{n+1}$ from $\omegaksi{n}$. We show that dense orbits of $\omegaksi{n}$ are actually dense \textit{hybrid} orbits and we provide sufficient conditions under which a sequence of compatible orbits is a sequence of compatible periodic hybrid orbits. (See Proposition \ref{prop:denseOrbitIsADenseHybridOrbit}, along with Theorems \ref{thm:hybridOrbitOfKS0ImpliesHybridOrbitOfKSn} and \ref{thm:bodd}.) In addition to this, we establish a \textit{topological dichotomy} for sequences of compatible orbits: a sequence of compatible orbits is either entirely comprised of closed orbits or entirely comprised of dense hybrid orbits.  (See Theorem \ref{thm:ATopologicalDichotomy}.)

We would like to suggest that such sequences of compatible orbits should have suitable limits that constitute orbits of the Koch snowflake billiard.  As we will see, there are hybrid orbits of $\omegaksi{n}$ that remain fixed in every subsequent approximation $\omegaksi{N}$, $n\geq N$, for some integer $N\geq 0$; see Theorem \ref{thm:SufficientConditionForCantorOrbit} and Example \ref{exa:AConstantSequenceOfCompatiblePeriodicHybridOrbits}.  Such hybrid orbits have basepoints corresponding to so-called \textit{Cantor-points} (i.e., points of $\ks$ which belong to some finite approximation $\ksi{n}$ but are not vertices of $\ksi{n}$; see \S\ref{subsec:TheKochSnowflake}), and they certainly constitute periodic orbits of the Koch snowflake fractal billiard. In \S\ref{sec:nontrivialPolygonalPaths}, a particular subset of basepoints can be derived from a certain sequence of compatible periodic hybrid orbits which is converging to a point of the snowflake curve called an \textit{elusive limit point} (i.e., a point of $\ks$ which does not belong to any polygonal approximation $\ksi{n}$, for $n\geq 0$;  see \S\ref{subsec:TheKochSnowflake}). (An interesting example of such a situation is provided by a sequence of compatible `hook orbits', discussed in Example \ref{exa:ASequenceOfCompatibleHookOrbits}; see also Example \ref{exa:ASequenceOfCompatiblePeriodicHybridOrbits} further discussed in \S\ref{sec:nontrivialPolygonalPaths}, as well as Conjecture \ref{conj:ConvergingToAnElusiveLimitPoint}.) Such basepoints can then be connected to form what we call a \textit{nontrivial polygonal path} of $\omegaks$.  Furthermore, we consider the concatenation of two suitably chosen nontrivial polygonal paths, thereby connecting two elusive limit points of $\ks$ in a well-defined manner, as is done in \S\ref{sec:nontrivialPolygonalPaths}.




In \S\ref{sec:concludingRemarks}, since the field of ``fractal billiards'' is still in its infancy, we discuss directions for future research and provide several open questions and conjectures regarding the `\textit{fractal flat surface}' $\sks$ and the generalization of our results to other fractal tables.  A number of these open questions are addressed in current works in progress (i.e., \cite{LapNie3,LapNie4}). For a more comprehensive, and somewhat different, list of conjectures, we refer the interested reader to \cite[\S 6]{LapNie2}.


\section{Background and preliminaries}
\label{sec:BackgroundAndPreliminaries}
\subsection{Mathematical Billiards}
\label{subsec:mathematicalBilliards}

Under ideal conditions, we know that a point mass making a perfectly elastic collision with a $C^1$ surface (or curve) will reflect at an angle which is equal to the angle of incidence, this being referred to as the \textit{law of reflection}.

Consider a compact region $\Omega(B)$ in the plane with connected boundary $B$.  Then, $\Omega(B)$ is called a \textit{planar billiard} when $B$ is smooth enough to allow the law of reflection to hold, off of a set of measure zero (where the measure is taken to be the Hausdorff measure or the arc length measure).  Though the law of reflection implicitly states that the angles of incidence and reflection be determined with respect to the normal to the line tangent at the basepoint, we adhere to the equivalent convention in the field of mathematical billiards that the vector describing the position and velocity of the billiard ball (which amounts to the position and angle, since we are assuming unit speed) be reflected in the tangent to the point of incidence.  That is, employing such a law in order to determine the path on which the billiard ball departs after impact essentially amounts to identifying certain vectors.

Then we can rigorously reformulate the law of reflection as follows: the vector describing the direction of motion is the reflection---through the tangent at the point of collision---of the translation of the vector previously describing the direction of motion.  One may express the law of reflection in terms of equivalence classes of vectors by identifying  these two vectors to form an equivalence class of vectors in the unit tangent bundle corresponding to the billiard table $\Omega(B)$ (see Figure \ref{fig:billiardMap}). (See \cite{Sm} for a detailed discussion of this equivalence relation on the unit tangent bundle $\Omega(B)\times S^1$.)

The billiard map $f_B$ is defined on the boundary $B$ of the billiard table.  Really, $f_B:(B\times S^1)/\sim\to (B\times S^1)/\sim$, where the equivalence relation $\sim$ is as discussed above.  More precisely, if $\theta^0$ is an inward pointing vector at a basepoint $x^0$, then $(x^0,\theta^0)$ is the representative element of the equivalence class $[(x^0,\theta^0)]$ and $f_B^k([x^0,\theta^0])=[(x^{k},\theta^{k})]$, where $f_B^k:=f_B\circ...\circ f_B$ is the $k$th iterate of $f_B$.




When $B$ is a nontrivial, connected polygon in $\mathbb{R}^2$, $\Omega(B)$ is called a \textit{polygonal billiard}.  The collection of vertices of $\Omega(B)$ forms a set of zero measure (when we take our measure to be the Hausdorff measure or simply, the arc-length measure on $B$), since there are finitely many vertices. A \textit{rational billiard} is defined below.

\begin{definition}[Rational polygon and rational billiard]
If $B$ is a nontrivial connected polygon such that for each interior angle $\theta_j$ of $B$ there are relatively prime integers $p_j \geq 1$ and $q_j\geq 1$ such that $\theta_j = \frac{p_j}{q_j} \pi$, then we call $B$ a \textit{rational polygon} and $\Omega(B)$ a \textit{rational billiard}.
\label{def:ratBilliard}
\end{definition}

\begin{figure}
\begin{center}
\includegraphics[scale = .55]{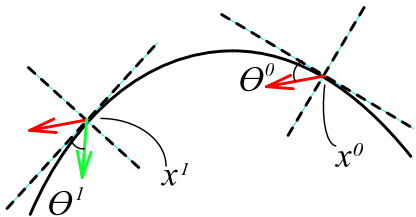}
\end{center}
\caption[Recovering the law of reflection]{A billiard ball traverses the interior of a billiard and collides with the boundary.  The velocity vector is pointed outward at the point of collision. The resulting direction of flow is found by either reflecting the vector through the tangent or by reflecting the incidence vector through the normal and reversing the direction of the vector.  We use the former method in this paper.}
\label{fig:billiardMap}
\end{figure}

\begin{remark}
\label{idx:billiardMapKSn}
In the sequel, we will simply refer to an element $[(x^k,\theta^{k})]$ by $(x^k,\theta^k)$, since the vector corresponding to $\theta^k$ is inward pointing at the basepoint $x^k$.  So as not to introduce unneccessary  notation, when we discuss the billiard map $f_{\ksi{n}}$ corresponding to the $n$th prefractal billiard $\omegaksi{n}$, we will simply write $f_{\ksi{n}}$ as $f_n$.  When discussing the discrete billiard flow on $(\omegaksi{n}\times S^1)/\sim$, the $k$th point in an orbit $(x^k,\theta^k)\in (\omegaksi{n}\times S^1)/\sim$ will instead be denoted by $(\xii{n}{k_n},\theta_n^{k_n})$, so as to be clear as to which space such a point belongs.  Specifically, $k_n$ refers to the number of iterates of the billiard map $f_n$ necessary to produce the pair $(\xii{n}{k_n},\theta_n^{k_n})$. An initial condition of an orbit of $\omegaksi{n}$ will always be referred to as $(\xio{n},\theta_n^0)$.
\end{remark}

In the event that a basepoint $x^j$ of $f_B^j(x^0,\theta^0)$ is a corner of $\Omega(B)$ (that is, a vertex of the polygonal boundary $B$), then the resulting closed orbit is said to be \textit{singular}.  In addition, there exists a positive integer $k$ such that the basepoint $x^{-k}$ of $f_B^{-k}(x^0,\theta^0)$ is a corner of $\Omega(B)$. (Here, $f_B^{-k}$ denotes the $k$th inverse iterate of $f_B$.)  The path then traced out by the billiard ball connecting $x^j$ and $x^{-k}$ is called a \textit{saddle connection}.

\begin{definition}[Footprint of an orbit]
\label{def:footprintOfAnOrbit}
Let $\orbitksi{n}$ be an orbit of $\omegaksi{n}$.  Then
\begin{align}
\footprintksi{n} &:= \orbitksi{n}\cap\ksi{n}
\end{align}
\noindent is called the \textit{footprint} of the orbit $\omegaksi{n}$.
\end{definition}


\subsection{Flat surfaces and properties of the flow}
\label{subsec:flatStructuresandFlatSurfaces}
In this section, we deal only with flat surfaces constructed from rational billiards.

\begin{definition}[Flat structure and flat surface]
\label{def:flatStructure}
Let $M$ be a compact, connected, orientable surface.  A \textit{flat structure} on $M$ is an atlas $\omega$, consisting of charts of the form $(U_\alpha,\varphi_\alpha)_{\alpha\in\mathscr{A}}$, where $U_\alpha$ is a domain (i.e., a connected open set) in $M$ and $\varphi_\alpha$ is a homeomorphism from $U_\alpha$ to a domain in $\mathbb{R}^2$, such that the following conditions hold:

\begin{enumerate}
\item{the collection $\{U_\alpha\}_{\alpha\in\mathscr{A}}$ cover the whole surface $M$ except for finitely many points $z_1,z_2,...,z_k$, called \textit{singular points};}
\item{all coordinate changing functions are translations in $\mathbb{R}^2$;}
\item{the atlas $\omega$ is maximal with respect to properties $(1)$ and $(2)$;}
\item{for each singular point $z_j$, there is a positive integer $m_j$, a punctured neighborhood $\dot{U}_j$ of $z_j$ not containing other singular points, and a map $\psi_j$ from this neighborhood to a punctured neighborhood $\dot{V}_j$ of a point in $\mathbb{R}^2$ that is a shift in the local coordinates from $\omega$, and is such that each point in $\dot{V}_j$ has exactly $m_j$ preimages under $\psi_j$.}
\end{enumerate}

\noindent We say that a connected, compact surface equipped with a flat structure is a flat surface.
\end{definition}

\begin{remark}
Note that in the literature on billiards and dynamical systems, the terminology and definitions pertaining to this topic are not completely uniform; see, for example, \cite{GaStVo,Gut1,GutJu1,GutJu2,HaKa,HuSc,Mas,MasTa,Ve1,Ve2,Vo,Zo}.  We have adopted the above definition for clarity and the reader's convenience.
\end{remark}

We now discuss how to construct a flat surface from a rational billiard.  Consider a rational polygon billiard $\Omega(P)$ with $k$ sides and interior angles $\frac{p_j}{q_j}\pi$ at each vertex $z_j$, for $1\leq j\leq k$, where the positive integers $p_j$ and $q_j$ are relatively prime.  The linear portions of the planar symmetries generated by reflection in the sides of the polygonal billiard $\Omega(P)$ generate the dihedral group $D_N$, where $N:=\text{lcm} \{q_j\}_{j=1}^k$. Next, we consider $\Omega(P)\times D_N$ (equipped with the product topology).  We want to glue `sides' of $\Omega(P)\times  D_N$ together and construct a natural atlas on the resulting surface $M$ so that $M$ becomes a flat surface.






As a result of the identification, the points of $M$ that correspond to the vertices of $\Omega(P)$ constitute (removable or  nonremovable) conic singularities of this surface.  Heuristically, $\Omega(P)\times D_N$ can be represented as $\{r_j\Omega(P)\}_{j=1}^{2N}$, in which case it is easy to see what sides are made equivalent under the action of $\sim$.  That is, $\sim$ identifies opposite and parallel sides in a manner which preserves the orientation.


\subsection{Unfolding a billiard orbit}
\label{subsec:UnfoldingABilliardOrbit}
Consider a rational polygonal billiard $\Omega(B)$ and an orbit $\mathscr{O}(x^0,\theta^0)$.  Reflecting the billiard $\Omega(B)$ and the orbit in a side of the billiard containing a basepoint of the orbit (or an element of the footprint of the orbit) partially unfolds the orbit $\mathscr{O}(x^0,\theta^0)$; see Figure \ref{fig:UnfoldingAnOrbitOfTheEquilateralTriangleBilliard}. Continuing this process until the orbit is a straight line produces as many copies of the billiard table as there are elements of the footprint. That is, if the period of an orbit $\mathscr{O}(x^0,\theta^0)$ is some positive integer $p$, then the number of copies of the billiard table in the unfolding is also $p$. Therefore, we refer to such a straight line as the unfolding of the billiard orbit.

\begin{figure}
\begin{center}
\includegraphics[scale=.35]{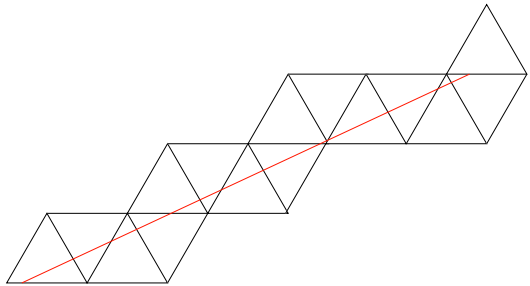}
\end{center}
\caption{Unfolding an orbit of the equilateral triangle billiard $\omegaksi{0}$.}
\label{fig:UnfoldingAnOrbitOfTheEquilateralTriangleBilliard}
\end{figure}

\subsection{Symbolic representation of the ternary Cantor set.}
\label{subsec:SymbolicRepresentationOfTheTernaryCantorSet}

Every point of the ternary Cantor set $\mathscr{C}$ (hereafter referred to as the Cantor set) has a ternary \textit{representation} given in terms of the characters $l$, $c$ and $r$ (standing for left, center and right, respectively).  For example, in terms of our alphabet, we say $1/3$ has a ternary representation given by $l\overline{r}$ and $1/4$ has a ternary representation given by $\overline{lr}$, where the over-bar indicates that the corresponding string of symbols is repeated ad infinitum.  We stress that a ternary \textit{representation} will always consist of infinitely many characters, while a ternary \textit{expansion} of an element of the unit interval $I$ can be finite, which is illustrated by the example of the rational value $1/3=0.1$ in base-$3$.  For the sake of simplicity, we take every element of $\cantor$ to have a ternary representation that contains no $c$'s. (What we want to prevent is an element of $\cantor$ having a representation determined by approaching it from the complement of $\cantor$ in $I=[0,1]$.  In this way, every point of $\cantor$ has a unique ternary representation.)

In the sequel, the \textit{type} of ternary representation will provide us with important information.  Particular qualities of the representation will, in part, dictate the type of resulting orbit and the nature of the sequence of compatible orbits.


\begin{notation}
\label{not:typeOfTernaryRepresentation}
The \textit{type of ternary representation} can be defined as follows.  If $x\in I$, then the first coordinate of $[\cdot, \cdot]$ describes the characters that occur infinitely often and the second coordinate of $[\cdot,\cdot]$ describes the characters that occur finitely often.  If we want to discuss many different types of ternary representations, then we use `or.'  That is, the notation $[\cdot,\cdot]\vee [\cdot,\cdot]\vee...\vee[\cdot,\cdot]$ is to be read as \textit{$[\cdot,\cdot]$ or $[\cdot,\cdot]$ or ... or $[\cdot,\cdot]$}. If the collection of characters occurring finitely often is empty, then we denote the corresponding type of ternary representation by $\tern{\cdot}{\emptyset}$.
\end{notation}

\begin{example}
If an element $x\in I=[0,1]$ has a ternary representation consisting of infinitely many $c$'s and $l$'s but finitely many $r$'s, then we write this type of ternary representation as $[lc,r]$.  If we have a collection of points in $I$ such that each point has a ternary representation consisting of infinitely many $c$'s and $l$'s and finitely many $r$'s or else infinitely many $l$'s and $r$'s and finitely many $c$'s, then we write the corresponding types of ternary representations as $[lc,r]\vee[lr,c]$.
\end{example}

\subsection{The Koch snowflake}
\label{subsec:TheKochSnowflake}
The Koch snowflake $\ks$ is a compact, connected and infinitely long curve in the plane with the property that at no point of the snowflake $\ks$ can one form a well-defined tangent.  This last property is what makes defining a law of reflection on the Koch snowflake billiard boundary so difficult.


The Koch snowflake is constructed, as shown in Figure \ref{fig:kochconstruction75intro}, by removing the open middle third of each successive side of length $1/3^{n-1}$ and placing at each pair of endpoints two uprights that would have formed the sides of an equilateral triangle with side lengths measuring $1/3^n$.

Next, we define what a \textit{cell} of the Koch snowflake billiard is.

\begin{definition}[A cell $C_{n,\nu}$ of $\omegaksi{n}$]
\label{def:cellOfOmegaKSn}
Consider (the `set-theoretic difference') $\omegaksi{n}\setminus \omegaksi{n-1}$.  Each resulting triangular region is then called a \textit{cell} of $\omegaksi{n}$. We denote a cell of $\omegaksi{n}$ by $C_{n,\nu}$, where $\nu$ denotes the side of $\omegaksi{n-1}$ to which the cell was glued; see Figure \ref{fig:illustrateDefiOfCell}.  Hence, there are $3\cdot 4^{n-1}$ cells $C_{n,\nu}$ of $\omegaksi{n}$ and so $1\leq \nu\leq 3\cdot 4^{n-1}$.
\end{definition}

\begin{figure}
\begin{center}
\includegraphics[scale=1.25]{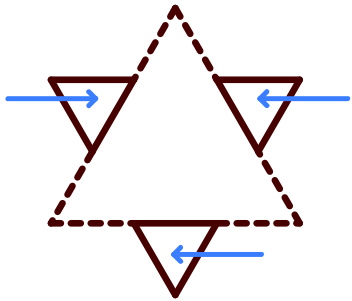}
\end{center}
\caption[An illustration of a cell $C_{1,\nu}$]{An illustration of Definition \ref{def:cellOfOmegaKSn} in terms of $\ksi{0} = \Delta$ and $\ksi{1}$.  Represented are the three cells $C_{1,1}$, $C_{1,2}$ and $C_{1,3}$ of $\omegaksi{1}$.}
\label{fig:illustrateDefiOfCell}
\end{figure}

In \S\ref{ch:TheKochSnowflakePrefractalBilliard}, we will be interested in the information provided by the ternary representation of an element $\xio{n}$ of a side $s_{n,\nu}$ of $\omegaksi{n}$.  We have already seen how to represent elements of the unit interval $I$. An element of a side $s_{n,\nu}$ of $\omegaksi{n}$ has a ternary representation also given in terms of the characters $l$, $c$ and $r$.






In the Koch snowflake $\ks$, there are three types of points: \textit{corners}, \textit{Cantor-points} and \textit{elusive limit points}.  A \textit{corner} of the Koch snowflake $\ks$ is a point of $\ks$ that is a corner of a finite approximation $\ksi{n}$, for some $n\geq 0$.  A \textit{Cantor-point} of $\ks$ is a point of a finite approximation $\ksi{n}$, for some $n\geq 0$, such that the ternary representation of this point (with respect to the side on which it lies) has the form $\tern{lr}{\emptyset}$  (that is, consists of infinitely many $l$'s \textit{and} $r$'s and no $c$'s).  Therefore, a Cantor-point is a point of $\ks$ that is not a corner, but definitely a point of $\ks$ that exists in some finite approximation.  An \textit{elusive limit point} of $\ks$ is a point of $\ks$ that never belongs to any finite approximation $\ksi{n}$.  We will see that it is the Cantor-points and elusive limit points that will be of the greatest interest in the sequel.






\section{The Koch snowflake prefractal flat surface $\sksi{n}$}
\label{ch:TheKochSnowflakePrefractalFlatSurfaceSKSi}
We denote by $\mathcal{S}(P)$ the flat surface $M$ constructed from a particular rational billiard $\Omega(P)$.

In particular, $\mathcal{S}(KS_n)$ is the flat surface associated with the prefractal billiard $\Omega(KS_n)$.  The flat surfaces $\mathcal{S}(KS_n)$, $n=1,2,3$, are given in Figure \ref{fig:kochsurface}.  For each billiard $\Omega(KS_n)$, the group of symmetries $D_N$, where $N=\text{lcm}\{q_j\}_{j=1}^{3\cdot 4^n}$ (that is, the second component in the product $\Omega(KS_n)\times D_N$) is the dihedral group $D_3$, and thus is independent of $n$.  From this, we deduce that for any $n\geq 0$, there are six copies of the prefractal billiard table $\omegaksi{n}$ (with sides appropriately identified) used in the construction of the associated flat surface $\mathcal{S}(KS_n):=(\omegaksi{n}\times D_3)/\sim$; see Figure \ref{fig:kochsurface}.  We refer the reader back to \S\ref{subsec:flatStructuresandFlatSurfaces} for a general discussion of flat surfaces and, e.g., to \cite{Ma} for the topological notions (such as covering map, branched cover) used in this section (esp., in Theorem \ref{thm:ksnBranchedCoverOfks0} and its proof).

\begin{figure}
\begin{center}
\includegraphics[scale = .35]{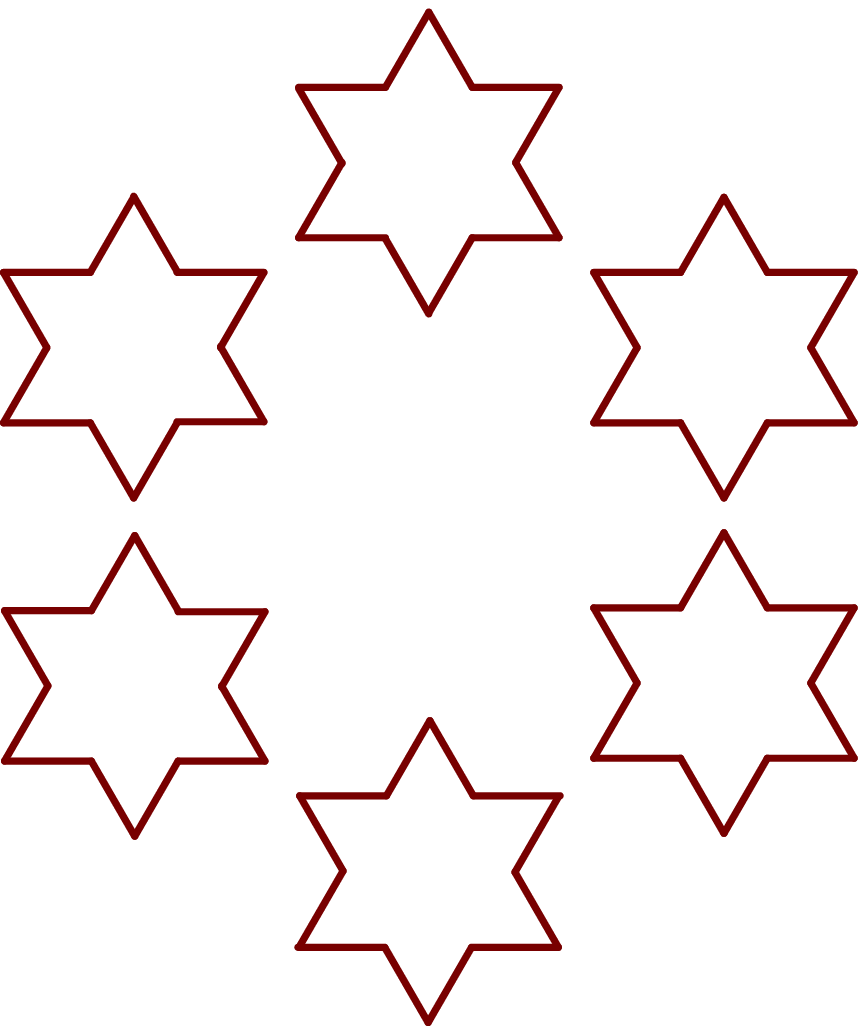}  \quad \includegraphics[scale = .35]{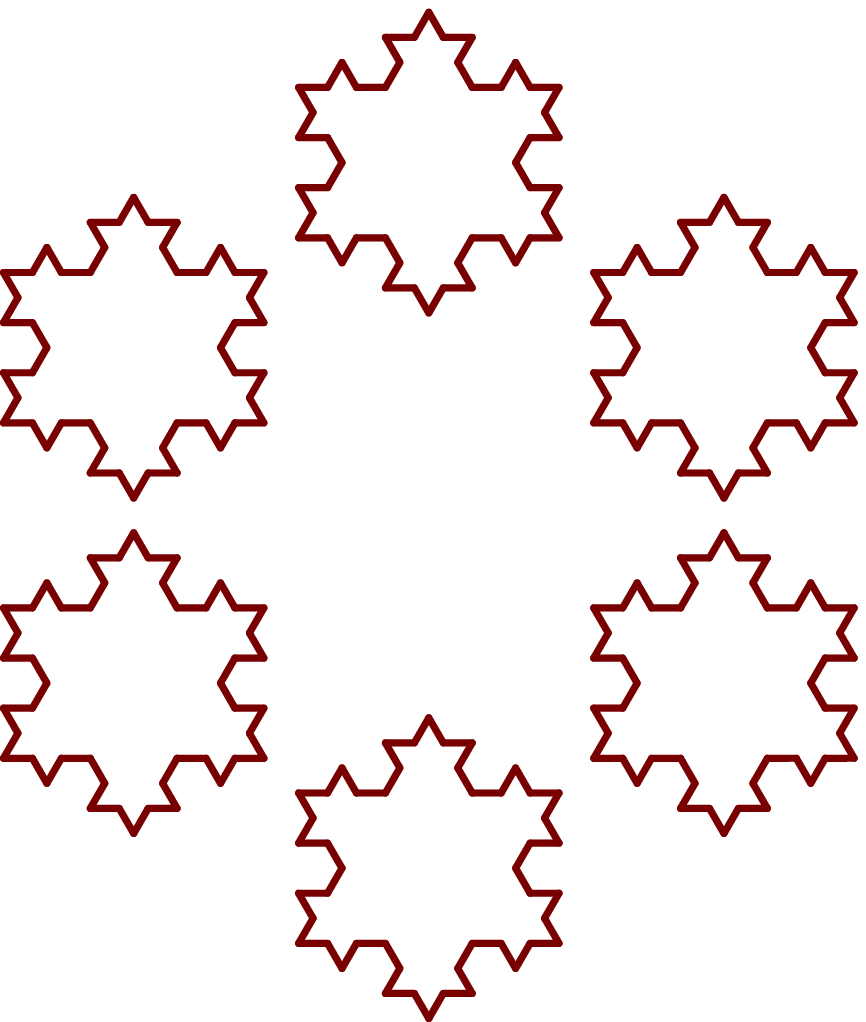} \quad \includegraphics[scale = .35]{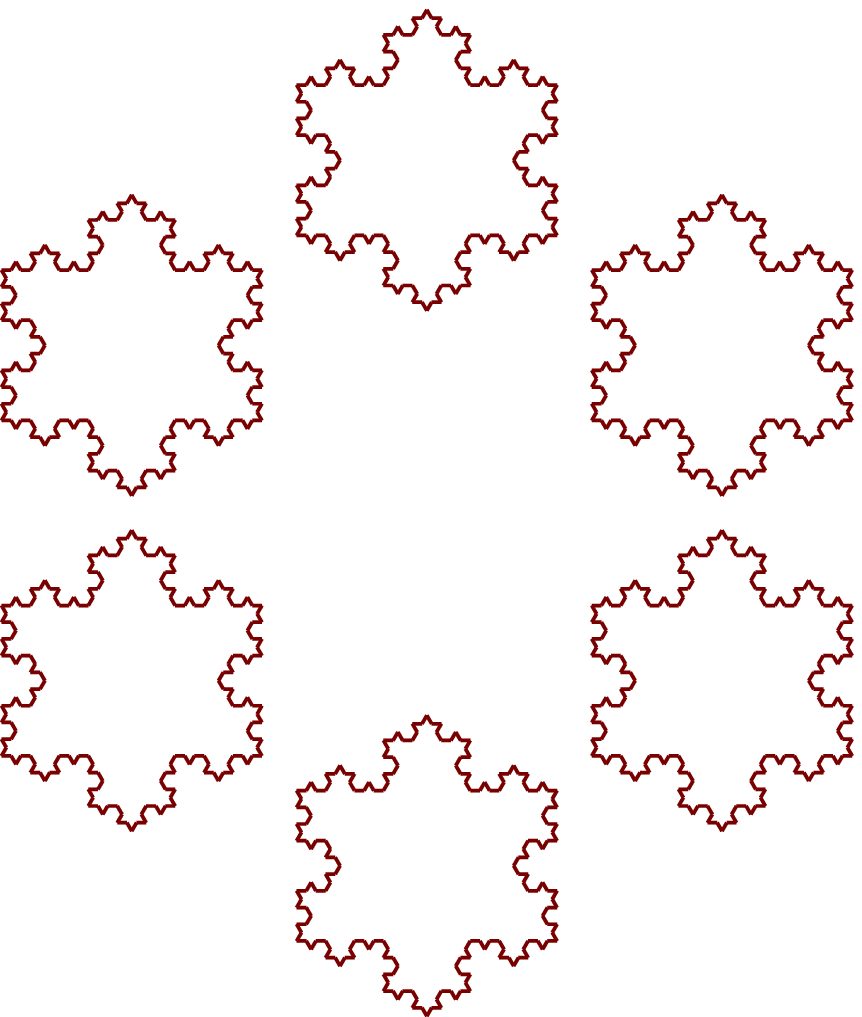}
\end{center}
\caption[Examples of the prefractal flat surface]{The flat surfaces $\mathcal{S}(KS_n)$, for $n=1,2,3$.  Note that the proper identification is not shown in the figures above.  Given the arrangement of the six copies of $KS_n$, one then identifies opposite and parallel sides to make the proper identification that results in a geodesic flow that is dynamically equivalent with the billiard flow on the associated billiards $\omegaksi{1},\omegaksi{2},\omegaksi{3}$.}
\label{fig:kochsurface}
\end{figure}

\begin{remark}
For the remainder of the paper, when we say that a regular polygon is of \textit{scale $n$}, we mean that the side length of the regular polygon is $1/3^n$.  For example, an equilateral triangle of scale $n$ is one for which the side length is $1/3^n$.
\end{remark}


The flat surface $\sksi{n}$ is a surface with both types of conic singularities: removable and nonremovable.  In constructing the flat surface $\sksi{n}$ via $\omegaksi{n}$, we see that the nonremovable conic singularities correspond to corners with obtuse angles of $\omegaksi{n}$ and removable singularities correspond to corners with acute angles (both measured relative to the interior of $\omegaksi{n}$).  Since for every $n\geq 1$, the measure of every obtuse corner is the same (specifically, $4\pi/3$ radians), it follows that the conic angle of a nonremovable singularity is $8\pi$.  For the same reason, every corner with an acute angle (with every acute angle measuring $\pi/6$ radians) gives rise to a removable singularity with conic angle $2\pi$; this is, in fact, a defining characteristic of a removable singularity.

\begin{proposition}
For any $n\geq 0$, the genus $g_n$ of the surface $\sksi{n}$ is given by
\begin{align}
\label{eqn:genus-n}g_n &= 3\cdot 4^n - 2.
\end{align}
\end{proposition}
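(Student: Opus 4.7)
The plan is to apply the Gauss--Bonnet theorem for flat surfaces with conic singularities, exploiting the fact that the singularity data has essentially been computed in the paragraph preceding the proposition. There, the acute corners of $\omegaksi{n}$ are shown to give rise only to removable singularities, while each obtuse corner produces a nonremovable singularity of conic angle $8\pi = 2\pi \cdot 4$. Writing each cone angle as $2\pi m_i$, Gauss--Bonnet reads $\sum_i (m_i - 1) = 2 g_n - 2$, so everything reduces to counting the number $b_n$ of obtuse corners of $\omegaksi{n}$, after which $2 g_n - 2$ will simply equal $3 b_n$.

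The only real work is therefore a combinatorial count, which I would carry out by induction on $n$ following the Koch construction. At stage zero, $\omegaksi{0}$ is an equilateral triangle, so $b_0 = 0$. Passing from $\omegaksi{k-1}$ to $\omegaksi{k}$, each of the $3 \cdot 4^{k-1}$ sides has its middle third replaced by an outward equilateral bump. Each such bump contributes one new acute tip (interior angle $\pi/3$) and two new obtuse valleys (interior angle $4\pi/3$), while the pre-existing vertices keep their angles. This yields the recurrence $b_k = b_{k-1} + 6 \cdot 4^{k-1}$, which telescopes to $b_n = 2(4^n - 1)$. Plugging this into the Gauss--Bonnet identity gives $2 g_n - 2 = 6 \cdot 4^n - 6$, that is, $g_n = 3 \cdot 4^n - 2$.

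I do not expect any deep obstacle; the argument is really a bookkeeping exercise once the preceding analysis of cone angles is in hand. The one point requiring care is the precise interior angle at the valley points, which is $4\pi/3$ rather than $2\pi/3$ exactly because the bumps are erected on the outside of $\omegaksi{k-1}$; a drawing makes this immediate. As a consistency check, the formula returns $g_0 = 1$, correctly recovering the hexagonal torus $\sksi{0}$. An equally viable alternative would be to compute $\chi(\sksi{n})$ directly from the CW decomposition coming from the six copies of $\omegaksi{n}$, using $F = 6$, $E = 9 \cdot 4^n$ (each side of $\omegaksi{n}$ is shared by two copies), and $V = 3 \cdot 4^n$, the last count relying on the fact that at every vertex of $\omegaksi{n}$ all six copies meet, since the local unfolding orbit has length $2 q_j = 6$ in both the acute and obtuse cases. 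This route reaches the same answer via $\chi = 6 - 6 \cdot 4^n = 2 - 2 g_n$.
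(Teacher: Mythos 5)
Your argument is correct, but it takes a different route from the paper. The paper simply quotes the Euler characteristic formula $\chi = N\sum_{j}1/q_j - N\nu + 2N$ from [HuSc] and substitutes $\nu = 3\cdot 4^n$, $q_j = 3$ for all $j$, and $N=3$; notably, that computation never needs to distinguish acute corners from obtuse ones, since only the denominators $q_j$ enter. Your primary route instead uses the cone-angle form of Gauss--Bonnet, $\sum_i(m_i-1)=2g_n-2$, which shifts all the work onto counting the nonremovable singularities; your recurrence $b_k = b_{k-1} + 6\cdot 4^{k-1}$, hence $b_n = 2(4^n-1)$, is right (at $n=1$ it gives the $6$ valley vertices of the Star of David), and combined with cone angle $8\pi$ (so $m_i=4$) at each such point it yields the stated genus. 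One point you use implicitly in the first route but only justify in the second is that each obtuse corner of $\omegaksi{n}$ gives exactly \emph{one} cone point of $\sksi{n}$: since $q_j=3$ at every vertex and $2N=6$, all six copies of the billiard table glue around a single point over each vertex (the fiber has $N/q_j=1$ element), so the bijection between obtuse corners and nonremovable singularities holds. Your alternative computation via the $C\!W$ decomposition, $\chi = V - E + F = 3\cdot 4^n - 9\cdot 4^n + 6$, is in fact precisely a derivation of the [HuSc] formula in this case ($V=\sum_j N/q_j$, $E = N\nu$, $F = 2N$), so it reproduces the paper's proof almost verbatim. The trade-off: the paper's route is shorter because the citation absorbs the topology, while yours makes the singularity data and its contribution to the genus explicit, which is information the paper uses anyway in \S 3 (e.g., in locating the branch locus in Theorem \ref{thm:ksnBranchedCoverOfks0}).
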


\begin{proof}
Let $\Omega(P)$ be a rational billiard and let $\{V_j\}_{j=1}^\nu$ denote the $\nu$ vertices of the polygon $P$.  For each $j=1,...,\nu$, let the measure of the angle formed by the vertex $V_j$ be $(p_j/q_j)\pi$, and let $N:=\text{lcm}\{q_j\}_{j=1}^\nu$.  Then, if $g$ is the genus of the corresponding flat surface $\mathcal{S}(P)$, the Euler characteristic $\chi=2-2g$ of that same surface is given by
\begin{align}
\chi &= N\sum_{j=1}^{\nu} \frac{1}{q_j} -N\nu + 2N;\label{eqn:eulerCharacteristic}
\end{align}
see \cite{HuSc} for a detailed description of how to calculate the genus of a surface that arises from a rational billiard table.

The prefractal billiard $\omegaksi{n}$ has $3\cdot 4^n$ many sides and as many vertices.  Moroever, $N_n=\text{lcm}\{q_j\}_{j=1}^{3\cdot 4^n}=3$, for every $n\geq 0$. The Euler characteristic $\chi_n=2-2g_n$ of $\sksi{n}$ is given by Equation (\ref{eqn:eulerCharacteristic}). Therefore, solving for $g_n$, we see that the genus of the prefractal flat surface $\sksi{n}$ is given by Equation (\ref{eqn:genus-n}).
\end{proof}

\subsection{$\sksi{n}$ is a branched cover of $\sksi{0}$}
\label{sec:SKSnAsABranchedCover}

Taking as inspiration the results and methods of Gutkin and Judge in \cite{GutJu1} and \cite{GutJu2}, we now show that for each $n\geq 1$, the flat surface $\mathcal{S}(KS_n)$ is a branched cover of the hexagonal torus $\mathcal{S}(KS_0)$; see Figure \ref{fig:hexagonalTorus}. To such end, we establish several results culminating in this fact.

\begin{figure}
\begin{center}
\includegraphics[scale = .45]{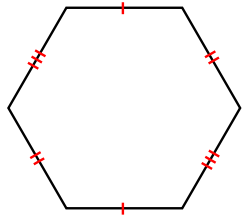}
\end{center}
\caption[The hexagonal torus $\mathcal{S}(\ksi{0})$]{The hexagonal torus $\mathcal{S}(\ksi{0})$. As usual, similarly marked sides are identified.  It should be noted that $\mathcal{S}(KS_0)$ is topologically (but not metrically) equivalent to the flat square torus.}
\label{fig:hexagonalTorus}
\end{figure}

\begin{lemma}
Let $n\in \mathbb{N}$.  Then, for any positive integer $k\geq n$, $\mathcal{S}(KS_n)$ can be tiled by equilateral triangles of scale $k$.
\label{lem:kochTiledByEqui}
\end{lemma}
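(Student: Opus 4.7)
The plan is to establish the tiling first on the polygonal region $\Omega(KS_n)$ itself and then lift it to the flat surface $\mathcal{S}(KS_n)$ through the six-copy construction.

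First, I would exploit the recursive construction of $KS_n$ to decompose $\Omega(KS_n)$ as a union of equilateral triangles of various scales. Specifically, $\Omega(KS_0)$ is one equilateral triangle of scale $0$, and for each $1 \leq m \leq n$, the set-theoretic difference $\Omega(KS_m)\setminus \Omega(KS_{m-1})$ consists of $3\cdot 4^{m-1}$ cells $C_{m,\nu}$, each of which is an equilateral triangle of scale $m$ (by Definition \ref{def:cellOfOmegaKSn}). Hence
\[
\Omega(KS_n) \;=\; \Omega(KS_0)\;\cup\;\bigcup_{m=1}^{n}\bigcup_{\nu=1}^{3\cdot 4^{m-1}} C_{m,\nu},
\]
where the union is essentially disjoint (cells intersect only along a common side of $\Omega(KS_{m-1})$).

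Next, I would invoke the elementary fact that an equilateral triangle of scale $m$ admits a tiling by $9^{k-m}$ equilateral triangles of scale $k$, for any integer $k \geq m$, obtained by slicing each side into $3^{k-m}$ equal segments and drawing the three families of lines parallel to the sides. Since $k \geq n \geq m$ for every cell appearing above, each of the finitely many triangular pieces comprising $\Omega(KS_n)$ can be refined into a tiling by triangles of scale $k$. Moreover, these refinements agree along the shared edges between cells, because on any such shared edge the two subdivisions both consist of the standard $3^{k-m}$ equal subdivisions inherited from the ambient grid of scale $k$ (the cells line up because every side of every cell has length $1/3^m$, a multiple of $1/3^k$). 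This produces a tiling of $\Omega(KS_n)$ by equilateral triangles of scale $k$.

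Finally, I would transfer this tiling to $\mathcal{S}(KS_n) = (\Omega(KS_n)\times D_3)/\sim$. The flat surface consists of six isometric copies $r_j\Omega(KS_n)$ of the prefractal billiard table, glued along pairs of parallel sides by translations (see \S\ref{subsec:flatStructuresandFlatSurfaces}). Each reflected copy $r_j\Omega(KS_n)$ inherits a tiling by scale-$k$ equilateral triangles from the construction above, since the action of $D_3$ preserves the family of equilateral triangles of any fixed scale. The identifications glue whole sides of $\Omega(KS_n)$, each of which has length $1/3^n$ and is subdivided identically (into $3^{k-n}$ equal pieces) by the tilings on both sides; thus the tiles on opposite sides match up under $\sim$, yielding a global tiling of $\mathcal{S}(KS_n)$ by equilateral triangles of scale $k$, as required. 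The main obstacle is simply verifying this last edge-compatibility, which however reduces to the observation that $1/3^n$ is always an integer multiple of $1/3^k$ when $k \geq n$, and the symmetry of the subdivision along any side of any cell.
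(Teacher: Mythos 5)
Your proposal is correct and follows essentially the same route as the paper: decompose $\Omega(KS_n)$ via the recursive cell construction into equilateral triangles of scales $m\leq n$, refine each into $9^{k-m}$ triangles of scale $k$, and pass to the six glued copies forming $\mathcal{S}(KS_n)$. Your added checks of edge-compatibility between adjacent cells and across the identifications on the flat surface are details the paper leaves implicit, but they do not change the argument.
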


\begin{proof}
This follows from the construction of the Koch snowflake.  We note that each triangle of scale $n$, denoted $\Delta_n$, can be tiled by $9^{k-n}$ triangles of scale $k\geq n$; see Figure \ref{fig:equiTiledHex} for the case when $k=n+1$.  Note that $\mathcal{S}(KS_n)=(\Omega(KS_n)\times D_3)/\sim$ and that $\Omega(KS_n)$ is constructed from $\Omega(KS_{n-1})$ by gluing a copy of $\Delta_n$ to every side $s_{n-1,\nu}$ at the middle third of $s_{n-1,\nu}$.  Since every triangle $\Delta_{n-1}$ tiling $\omegaksi{n-1}$ can also be tiled by $\Delta_n$, it follows that $\omegaksi{n}$ is tiled by $\Delta_n$.. So, $\sksi{n}$ can be tiled by equilateral triangles of scale $k$, for every $k\geq n$.
\end{proof}

In the sequel, given a bounded set $A\subseteq \mathbb{R}^2$, we will write that ``$A$ can be tiled by $H_n$'' in order to indicate that $A$ can be tiled by finitely many copies of hexagonal tiles $H_n$ of scale $n$.

\begin{figure}
\begin{center}
\includegraphics[scale = .5]{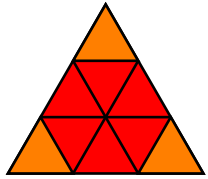}
\end{center}
\caption[A triangle is tiled by smaller triangles]{We see that $\Delta_n$ is tiled by nine copies of $\Delta_{n+1}$, six of which form a hexagonal tile $H_{n+1}$ in the center.}
\label{fig:equiTiledHex}
\end{figure}

\begin{proposition}
\label{prop:hextile}
Let $n\in \mathbb{N}$.  Then the flat surface $\mathcal{S}(KS_n)$ can be tiled by $H_k$, for all $k\geq n+1$, in such a way that each conic singularity is at the center of some tile $H_k$.
\end{proposition}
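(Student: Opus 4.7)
The plan is to start from the triangulation of $\sksi{n}$ by $\Delta_k$ provided by Lemma \ref{lem:kochTiledByEqui} (valid for all $k \geq n$, hence in particular for $k \geq n+1$), and to group six triangles sharing a common vertex into a single hexagonal tile $H_k$. The key combinatorial tool is the standard $3$-coloring of the vertices of the scale-$k$ equilateral triangular lattice $L_k$: coloring a vertex by $(i-j) \bmod 3$ in the standard basis $(e_1,e_2)$ gives each triangle one vertex of each color, and the hexagons $H_k$ are then exactly the unions of the six triangles meeting at the \emph{color-$0$} vertices. The proposition thus reduces to showing that the coloring can be arranged so that every conic singularity of $\sksi{n}$ receives color $0$.

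First, I would verify that every vertex of $\omegaksi{n}$ belongs to the color-$0$ class at scale $k$, for each $k \geq n+1$. By induction on $n$ (the new vertices created in passing from $\omegaksi{m}$ to $\omegaksi{m+1}$ are the endpoints of middle thirds and the tips of outward equilateral bumps of side $1/3^{m+1}$, all of which are scale-$(m+1)$ lattice points), every vertex of $\omegaksi{n}$ is a vertex of the scale-$n$ lattice $L_n$. Viewed inside the finer lattice $L_k$ with $k \geq n+1$, the coordinates of such a vertex in the $L_k$-basis are multiples of $3^{k-n} \geq 3$, so $(i-j) \equiv 0 \pmod{3}$.

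Next, I would check that the 3-coloring descends coherently to $\sksi{n} = (\omegaksi{n} \times D_3)/\sim$. A direct computation in lattice coordinates shows that the rotation by $2\pi/3$ and each of the linear reflections generating $D_3$ preserve the coloring $(i-j) \bmod 3$ of $L_k$ (for instance, the reflection fixing $e_2$ sends $(i,j)\mapsto(-i,i+j)$, and $-i-(i+j)=-2i-j\equiv i-j \pmod{3}$). Consequently, the color-$0$ vertices form a well-defined subset of $\sksi{n}$, and by the previous step this subset contains every conic singularity.

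Finally, at each color-$0$ vertex $V$ of $\sksi{n}$ I would gather the $\Delta_k$ triangles incident to $V$ and assemble them into hexagons centered at $V$. At an ordinary color-$0$ vertex or at a removable singularity (cone angle $2\pi$), the six surrounding triangles form a single tile $H_k$. At a nonremovable singularity (cone angle $8\pi$), twenty-four triangles meet at $V$; they partition into four consecutive angular sectors of width $2\pi$, each sector consisting of six triangles and forming a tile $H_k$ with center $V$. That these hexagons tile $\sksi{n}$ (interiors pairwise disjoint, closures covering the surface) follows locally from the standard fact about the hexagonal tiling in the Euclidean plane, applied in Euclidean charts on $\sksi{n}$ away from the singular set. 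The main bookkeeping obstacle is precisely the nonremovable singularities: four distinct tiles share $V$ as their common center while occupying four disjoint ``sheets'' of the local conical structure, and one must verify that this local description glues correctly with the flat tiling on the complement.
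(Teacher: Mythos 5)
Your argument is correct, and it reaches the conclusion by a genuinely different route from the paper's. The paper proceeds by a double induction anchored in figures: it exhibits the tiling of $\sksi{1}$ by $H_2$ directly (Figure \ref{fig:ks1TiledByH2}), passes from the $H_k$-tiling to the $H_{k+1}$-tiling by observing that the center of each $H_k$ is also the center of an embedded $H_{k+1}$, and handles the passage from $\sksi{N}$ to $\sksi{N+1}$ by subdividing the newly attached triangles. Your mod-$3$ coloring $(i-j)\bmod 3$ of the scale-$k$ lattice $L_k$ replaces all of this with a single closed-form criterion: each $\Delta_k$ has exactly one color-$0$ vertex, so the hexagons centered at color-$0$ vertices automatically partition the triangles into disjoint copies of $H_k$, and the proposition reduces to the purely arithmetic fact that every vertex of $\omegaksi{n}$ lies in $L_n$ and hence has $L_k$-coordinates divisible by $3^{k-n}\geq 3$. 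This buys precision that the figure-based induction lacks; in particular, it makes transparent why the hypothesis $k\geq n+1$ cannot be weakened to $k\geq n$ (at scale $n$, adjacent vertices of $\omegaksi{n}$ receive different colors), and it yields directly the count of four hexagons sharing each cone point of angle $8\pi$, a fact the paper invokes later in the proof of Theorem \ref{thm:ksnBranchedCoverOfks0}. The one point you should still nail down is the descent of the coloring to the quotient $(\omegaksi{n}\times D_3)/\sim$: invariance under the linear $D_3$-action is not by itself sufficient, because the side identifications are realized by translations; but each such translation carries a side of one copy of $\omegaksi{n}$ onto a parallel side of another, hence carries vertices of $\omegaksi{n}$ (which are color-$0$ for $k\geq n+1$) to vertices of $\omegaksi{n}$, so it preserves the color classes and the local hexagonal tilings glue exactly as you claim.
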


\begin{proof}
We see in Figure \ref{fig:ks1TiledByH2} that $\sksi{1}$ can be tiled by $H_2$ such that each conic singularity is at the center of some tile $H_2$.  Each $H_2$ is tiled by six equilateral triangles $\Delta_2$.  As was seen in the proof of Lemma \ref{lem:kochTiledByEqui}, each $\Delta_2$ is tiled by nine $\Delta_3$ such that six of these triangles form a hexagonal tile $H_3$; see Figure \ref{fig:equiTiledHex}.  At the center of $H_2$ is a copy of $H_3$.  Hence, each conic singularity remains at the center of some tile $H_3$; see Figure \ref{fig:hexagonTiled}.  Continuing in this fashion, we see that for each $k\geq 2$, $H_k$ tiles $\sksi{2}$ in such a way that each conic singularity is at the center of some $H_k$.

Suppose there exists $N\in \mathbb{N}$ such that, for every $n\leq N$, $\sksi{n}$ can be tiled by $H_k$, for every $k\geq n+1$.  In particular, $\sksi{N}$ can be tiled by $H_k$, for every $k\geq N+1$.  We then have that, for every $k\geq N+2$, $\sksi{N}$ can be tiled by $H_k$.  By Lemma \ref{lem:kochTiledByEqui}, $\Delta_{N+1}$ tiles $\sksi{N+1}$.  Each triangular region $\Delta_{N+1}$ in $\sksi{N+1}$ but not in $\sksi{N}$ is tiled by nine triangles $\Delta_{N+2}$ in such a way that six $\Delta_{N+2}$ comprise a tile $H_{N+2}$.  Continuing in this fashion, we see that each $\Delta_k$ contributes to a hexagonal tile $H_k$ (as part of the embedded tiling) in such a way that each conic singularity is at the center of some hexagonal tile $H_k$.
\end{proof}

\begin{figure}
\begin{center}
\includegraphics[scale=.55]{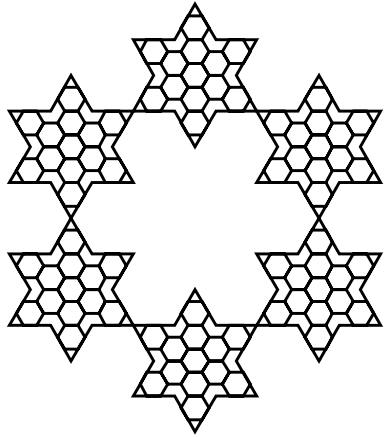}
\end{center}
\caption[Tiling $\sksi{1}$ by $H_2$]{Tiling the flat surface $\sksi{1}$ by hexagonal tiles $H_2$.  We note that the conic singularities (both removable and nonremovable) are at the center of hexagonal tiles.}
\label{fig:ks1TiledByH2}
\end{figure}

\begin{figure}
\begin{center}
\includegraphics[scale = .4]{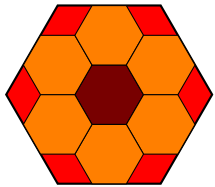}
\end{center}
\caption[Six triangles $\Delta_n$ tile $H_n$]{Six triangles $\Delta_n$ tile $H_n$.  The hexagonal tile $H_n$ is tiled by seven tiles $H_{n+1}$ with six rhombic tiles.}
\label{fig:hexagonTiled}
\end{figure}


\begin{theorem}
\label{thm:ksnBranchedCoverOfks0}
For every $n\in \mathbb{N}$, the prefractal Koch snowflake flat surface $\mathcal{S}(KS_n)$ is a branched cover of the \emph{(}singly punctured\emph{)} prefractal Koch snowflake flat surface $\mathcal{S}(KS_0)$, which is the hexagonal torus.  Such a covering map $p_n:\sksi{n}\to\sksi{0}$ is given by suitably defined translations on $\sksi{n}$.
\end{theorem}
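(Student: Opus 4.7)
The plan is to construct the covering map $p_n$ explicitly, using the hexagonal tilings from Proposition \ref{prop:hextile} together with the translation-surface structure on $\sksi{n}$. The hexagonal torus $\sksi{0}$ can be identified with the quotient $\mathbb{R}^2/\Lambda_0$, where $\Lambda_0$ is the hexagonal lattice generated by the side vectors of its scale-$0$ fundamental hexagon. Iterating the subdivision scheme illustrated in Figures \ref{fig:equiTiledHex} and \ref{fig:hexagonTiled}, one first observes that $\sksi{0}$ also admits a tiling by hexagons $H_{n+1}$ (together with the requisite rhombic pieces), at the same scale as the tiling on $\sksi{n}$ furnished by Proposition \ref{prop:hextile}.

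Since $\sksi{n} = (\omegaksi{n}\times D_3)/\sim$ is built by identifying opposite, parallel sides of six reflected copies of $\omegaksi{n}$, its flat atlas is a translation atlas away from the conic singularities (this is the standard Zemlyakov--Katok picture, recalled in \S\ref{subsec:flatStructuresandFlatSurfaces}). I would then define $p_n$ tile-by-tile: each hexagonal or rhombic tile $T$ in $\sksi{n}$ is sent to the corresponding tile in the tiling of $\sksi{0}$ via the unique translation in $\mathbb{R}^2$ that matches their centers (respectively, their vertex patterns), followed by the quotient $\mathbb{R}^2 \to \mathbb{R}^2/\Lambda_0 = \sksi{0}$. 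Because Proposition \ref{prop:hextile} places every conic singularity of $\sksi{n}$ at the center of some $H_{n+1}$, and because the hexagonal torus has a unique distinguished point at the center of its fundamental hexagon, all singularities of $\sksi{n}$ project to the singular point of $\sksi{0}$; away from this branch locus, $p_n$ is a local translation, hence a local isometry, and the finite degree follows from counting tiles.

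The main obstacle is verifying that $p_n$ is globally well-defined, i.e.\ that the translations chosen on adjacent tiles agree modulo $\Lambda_0$, both across edges lying inside one copy of $\omegaksi{n}$ and across the $D_3$-identifications defining $\sksi{n}$. For adjacent tiles inside a single copy, consistency is immediate from the scaled hexagonal lattice underlying the subdivision. For tiles glued across the $D_3$-quotient, one must track how the paired reflections combine to produce a pure translation in the flat structure, and then check that the resulting displacement lies in $\Lambda_0$. I would handle this by induction on $n$: granting the cover $p_{n-1}$, each cell $C_{n,\nu}$ newly attached in passing from $\omegaksi{n-1}$ to $\omegaksi{n}$ sits inside a scale-$n$ equilateral triangle whose developing translation is already pinned down by $p_{n-1}$, which forces the extension of $p_{n-1}$ to the refined tiling to be consistent. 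Once this compatibility is established, $p_n$ is continuous, surjective, and locally a translation off the branch set; its branching occurs exactly at the nonremovable conic singularities (those of conic angle $8\pi$), with ramification orders dictated by the Gauss--Bonnet computation already implicit in the genus formula \eqref{eqn:genus-n}.
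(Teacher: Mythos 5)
Your proposal takes essentially the same route as the paper's proof: both rely on the tiling of $\sksi{n}$ by hexagons $H_{n+1}$ from Proposition \ref{prop:hextile}, with every conic singularity at the center of a tile, and define $p_n$ by translating tiles onto the (rescaled) hexagonal torus, so that the branch locus is the single center point of $\sksi{0}$, over which the nonremovable singularities of conic angle $8\pi$ sit with local degree four while all other fibers are evenly covered. Your version merely makes explicit the well-definedness checks (consistency of the translations across tile edges and across the $D_3$-identifications) that the paper leaves implicit.
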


\begin{proof}
The center point $x_0$ of the flat hexagonal torus $\mathcal{S}(KS_0)$ is a branched locus of the cover $\mathcal{S}(KS_n)$ when $\mathcal{S}(KS_n)$ is tiled by $H_{n+1}$ as described in Proposition \ref{prop:hextile}.  This follows from the fact that every nonremovable conic singularity of $\sksi{n}$ is at the center of four hexagonal tiles.  Specifically, this means that this center point $x_0$ is not evenly covered by the covering map $p_n:\mathcal{S}(KS_n)\rightarrow \sksi{0}$ determined by suitable translations of hexagonal tiles $H_{n+1}$ on $\sksi{n}$.  Any other point in $\mathcal{S}(KS_0)$ is evenly covered since every element in the fiber $p_n^{-1}(z)$, $z\neq x_0$, has a conic angle of $2\pi$.
\end{proof}


\subsection{Minimality of the flow on $\omegaksi{n}$ and its consequences.}
\label{subsec:minimalityOfTheFlowOnOmegaKSnAndConsequences}







When discussing billiard orbits of $\omegaksi{n}$, we will find it more convenient to measure angles of incidence and reflection relative to a fixed coordinate system.  As such, we suppose that the left corner of the equilateral triangle with side length one constitutes the origin.  However, on occasion, we will find it useful to refer to the angle of reflection measured relative to a particular side.  So that no confusion arises, when we are discussing such a situation, if  $\varpi$ is an angle measured relative to a side of $\omegaksi{n}$, then $\theta(\varpi)$ is the same angle measured relative to the fixed coordinate system.

If $\{u_1,u_2\}$ is a basis for $\mathbb{R}^2$, then a vector $z\in \mathbb{R}^2$  is called \textit{rational with respect to} $\{u_1,u_2\}$ if $z=mu_1+nu_2$, for some $m,n\in \mathbb{Z}$. Combining the results of \cite{GutJu1} with Theorem 3 of \cite{Gut2}, we can state the following result, which we do not claim as a new theorem, but which we rephrase in a way that is suitable for our purposes.

\begin{theorem}[\cite{Gut2}]
\label{thm:gutkinsResult}
Let $\mathcal{S}(P)$ be a flat surface determined from a rational polygonal billiard $\Omega(P)$.  If $\mathcal{S}(P)$ is a branched cover of a singly punctured torus, then a geodesic on $\mathcal{S}(P)$ is periodic or forms a saddle connection if and only if the geodesic has an initial direction that is rational.  In addition, a geodesic on $\mathcal{S}(P)$ is dense if and only if the geodesic has an initial direction that is irrational.
\end{theorem}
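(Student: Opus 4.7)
The plan is to derive this directly from the classical dynamics of the flat torus by pushing information up and down through the branched covering map $p:\mathcal{S}(P)\to T^2$, where $T^2$ denotes the singly punctured torus. Because the paper explicitly notes that this is a rephrasing of results in \cite{Gut2} and \cite{GutJu1}, I would present the argument as a verification that the branched-covering hypothesis reduces the statement to the well-known torus dichotomy.

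First I would recall the torus dichotomy: on $T^2$ (viewed as a flat surface), a geodesic in direction $\theta$ is closed precisely when $\theta$ is rational with respect to the lattice basis coming from the torus identifications, and is uniformly distributed (hence dense) precisely when $\theta$ is irrational; this is classical (Kronecker--Weyl). The notion of ``rational direction'' on $\mathcal{S}(P)$ is defined relative to the same basis $\{u_1,u_2\}$ that generates the torus lattice, so the covering map $p$ takes a geodesic with direction $\theta$ on $\mathcal{S}(P)$ to a geodesic with the same direction $\theta$ on $T^2$.

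Next I would argue the two implications. Suppose $\theta$ is rational. Then $p\circ\gamma$ closes up on $T^2$ after finite length $\ell$. Lift this closed loop back to $\mathcal{S}(P)$ starting from the initial point of $\gamma$: because $p$ is a \emph{finite} branched cover, after at most $(\deg p)$ traversals of this loop the lift must either return to its starting point in the same direction (giving a periodic geodesic) or run into a branch point of $p$ (giving a saddle connection, since branch points are exactly the nonremovable conic singularities of $\mathcal{S}(P)$). Conversely, any periodic geodesic or saddle connection on $\mathcal{S}(P)$ projects to a geodesic on $T^2$ that eventually revisits the same fiber point in the same direction, forcing its direction to be rational. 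Suppose instead $\theta$ is irrational. Then $p\circ\gamma$ is equidistributed on $T^2$. Since $p$ is a surjective, finite-degree branched cover and the branch locus is finite, equidistribution lifts: pulling back the Lebesgue measure via the local isometry $p$ shows that $\gamma$ visits every open set of $\mathcal{S}(P)$, so $\gamma$ is dense. Conversely, if $\gamma$ is dense on $\mathcal{S}(P)$, then $p\circ\gamma$ is dense on $T^2$, so by the torus dichotomy the direction $\theta$ cannot be rational.

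The main technical point, and the only place the branched structure really matters, is the lifting step for rational directions: one must verify that if a closed geodesic on $T^2$ passes through the image of a branch point, then the lift becomes a saddle connection rather than a periodic orbit, while if it avoids the image of every branch point, then the lift closes up after finitely many periods. This is handled precisely by the description of $p_n$ in Theorem \ref{thm:ksnBranchedCoverOfks0}, where the cover is unramified away from the single branch point, so standard covering-space monodromy arguments complete the dichotomy.
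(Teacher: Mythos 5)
The paper itself offers no proof of this statement: as the authors say just above it, the theorem is quoted from Theorem 3 of \cite{Gut2} combined with results of \cite{GutJu1}, so there is no internal argument to compare against and your proposal has to be judged on its own. Your treatment of the rational case is essentially the standard one and is sound: a closed geodesic on the torus lifts, the first-return permutation on the finitely many sheets has finite order, so the lift either closes up or runs into a singularity lying over the branch point (a saddle connection); conversely a periodic geodesic or saddle connection projects to a straight loop based at the single branch point, whose holonomy is a lattice vector, forcing a rational direction.

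The genuine gap is in the irrational case. The step ``equidistribution lifts: pulling back the Lebesgue measure via the local isometry $p$ shows that $\gamma$ visits every open set of $\mathcal{S}(P)$'' is a non sequitur: density (or equidistribution) of $p\circ\gamma$ on the torus only tells you that the orbit closure $K$ of $\gamma$ upstairs is a closed invariant set with $p(K)$ equal to the whole torus; it does not force $K=\mathcal{S}(P)$, since $K$ could meet each fiber in a proper subset. Tellingly, your argument for this implication never uses the hypothesis that the cover is branched over a \emph{single} point, yet the conclusion fails without it: for translation covers of the torus branched over two points whose difference is an irrational vector (e.g., two tori glued along a slit), there exist irrational directions containing saddle connections, and the directional flow is not minimal, even though every orbit still projects to a dense geodesic downstairs. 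The correct route---and the reason the single-puncture hypothesis matters---is: in an irrational direction there can be no saddle connection, because any saddle connection projects to a straight loop based at the one branch point and hence has lattice (rational) holonomy; a translation flow with no saddle connection in its direction is minimal (the Keane-type criterion for translation surfaces); and minimality gives density of every orbit, including separatrices. That chain of reasoning is the substance of Gutkin's theorem and cannot be replaced by lifting the Kronecker--Weyl dichotomy through $p$.
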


\begin{figure}
\begin{center}
\includegraphics[scale=0.35]{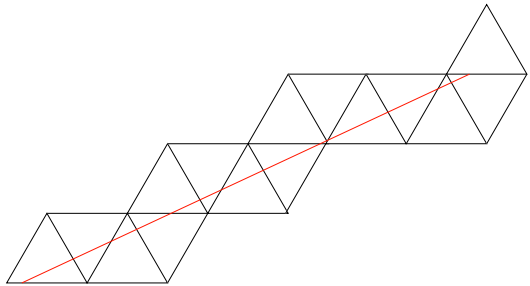}
\caption{The lattice points constitute linear integer combinations of the basis vectors $\{u_1,u_2\}= \{(1,0),(1/2,\sqrt{3}/2)\}$. Here we show an unfolded orbit to emphasize the utility of such a tool.  The unfolded orbit has an initial direction that is rational, meaning such an orbit will be closed in the equilateral triangle.}
\label{fig:unfoldingAnOrbit}
\end{center}
\end{figure}

By what we saw in \S\S\ref{subsec:flatStructuresandFlatSurfaces} and \ref{subsec:UnfoldingABilliardOrbit}, the geodesic flow on $\sksi{n}$ is dynamically equivalent to the billiard flow on $\omegaksi{n}$. In \S\ref{sec:SKSnAsABranchedCover}, we proved that $\mathcal{S}(KS_n)$ is a branched cover of the singly punctured torus $\mathcal{S}(KS_0)$.  Applying Theorem \ref{thm:gutkinsResult} to the Koch snowflake prefractal flat surfaces, we then obtain the following result.

\begin{theorem}
\label{thm:gutkinsResultAppliedToPrefractals}
Let $n\geq 0$.  A direction $\theta$ is a rational direction \emph{(}with respect to the basis $\{u_1,u_2\}= \{(1,0),(1/2,\sqrt{3}/2)\}$\emph{)} if and only if a geodesic in the direction of $\theta$ on $\sksi{n}$ is periodic or forms a saddle connection. Furthermore, a direction $\theta$ is an irrational direction \emph{(}with respect to $\{u_1,u_2\}$\emph{)} if and only if a geodesic in the direction of $\theta$ on $\sksi{n}$ is dense.
\end{theorem}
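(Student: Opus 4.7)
The plan is to treat this theorem as an application of the machinery already assembled in the paper rather than a result requiring independent argument. Three ingredients are in hand: (i) Theorem \ref{thm:ksnBranchedCoverOfks0}, which produces a branched covering map $p_n: \sksi{n}\to\sksi{0}$ onto the singly punctured hexagonal torus; (ii) Theorem \ref{thm:gutkinsResult}, which says that on any flat surface that branched-covers a singly punctured torus, the rational/irrational dichotomy of initial directions characterizes periodic-or-saddle versus dense geodesics; and (iii) the dynamical equivalence between the billiard flow on $\omegaksi{n}$ and the geodesic flow on $\sksi{n}$ recalled in \S\ref{subsec:flatStructuresandFlatSurfaces}--\S\ref{subsec:UnfoldingABilliardOrbit}. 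The strategy is simply to thread these three ingredients together.

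First I would fix the lattice datum. Gutkin's theorem in \cite{Gut2}, as restated here, produces a notion of ``rational'' direction that is the one intrinsic to the target torus: namely, rationality with respect to the lattice whose fundamental domain is the flat torus being covered. Hence the first step is to identify that lattice for $\sksi{0}$. Since the flat hexagonal torus $\sksi{0}$ is obtained from six copies of $\omegaksi{0}=\Delta_0$ under the $D_3$-action and the subsequent side-identifications of \S\ref{subsec:flatStructuresandFlatSurfaces}, its side-vectors are precisely $u_1=(1,0)$ and $u_2=(1/2,\sqrt{3}/2)$, up to the orientation choice fixed in Figure \ref{fig:unfoldingAnOrbit}. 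Thus the Gutkin-rational directions on $\sksi{0}$ are exactly the $\theta$ that are rational with respect to the basis $\{u_1,u_2\}$ in the sense defined just before Theorem \ref{thm:gutkinsResult}.

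Next I would pull back the dichotomy along the covering map. Because $p_n$ is a branched cover of translation surfaces (its chart transitions are translations, by item (2) of Definition \ref{def:flatStructure} and the construction in Theorem \ref{thm:ksnBranchedCoverOfks0}), the differential $dp_n$ is (locally) the identity, so a geodesic on $\sksi{n}$ in direction $\theta$ projects to a geodesic on $\sksi{0}$ in the same direction $\theta$; conversely a geodesic on $\sksi{0}$ lifts, at least away from the branch locus, to a geodesic on $\sksi{n}$. Combining this with Theorem \ref{thm:gutkinsResult} (applied with $P=\ksi{n}$ and using Theorem \ref{thm:ksnBranchedCoverOfks0}) yields the desired equivalence on $\sksi{n}$: rational $\theta$ iff geodesic closed or hits the singular set (saddle connection); irrational $\theta$ iff geodesic dense in $\sksi{n}$. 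Finally I would transfer the statement back to the billiard $\omegaksi{n}$ by invoking the dynamical equivalence of \S\ref{subsec:UnfoldingABilliardOrbit}, which identifies billiard orbits in direction $\theta$ on $\omegaksi{n}$ with geodesics in direction $\theta$ on $\sksi{n}$ after unfolding.

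The main obstacle I anticipate is not any of these steps in isolation, but a careful bookkeeping issue: verifying that ``the natural lattice of $\sksi{0}$ as required by Gutkin'' coincides, up to the standing orientation conventions of \S\ref{subsec:minimalityOfTheFlowOnOmegaKSnAndConsequences}, with the explicit basis $\{(1,0),(1/2,\sqrt{3}/2)\}$ asserted in the statement. In other words, one must certify that the branched covering $p_n$ of Theorem \ref{thm:ksnBranchedCoverOfks0}, which was built from translations of the hexagonal tiles $H_{n+1}$, preserves the identification of directions used to define rationality. Once this compatibility is spelled out, the rest of the argument is a short concatenation of previously established facts.
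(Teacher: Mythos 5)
Your proposal is correct and follows essentially the same route as the paper, which derives the theorem by combining the dynamical equivalence of the billiard and geodesic flows, the branched-cover result of Theorem \ref{thm:ksnBranchedCoverOfks0}, and Gutkin's dichotomy (Theorem \ref{thm:gutkinsResult}); the paper in fact states the theorem as an immediate consequence of these three facts without a separate proof environment. Your additional care in identifying the lattice $\{(1,0),(1/2,\sqrt{3}/2)\}$ of the hexagonal torus and in noting that the translation covering preserves directions only makes explicit what the paper leaves implicit.
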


Because the geodesic flow on the prefractal flat surface $\sksi{n}$ is dynamically equivalent to the billiard flow on the corresponding billiard table $\omegaksi{n}$, we can state Theorem \ref{thm:gutkinsResultAppliedToPrefractals} in terms of the billiard flow on the prefractal billiard $\omegaksi{n}$.

\begin{corollary}
\label{cor:gutkinsResultAppliedToPrefractals}
Let $n\geq 0$.  A direction $\theta$ is a rational direction \emph{(}with respect to $\{u_1,u_2\}= \{(1,0),(1/2,\sqrt{3}/2)\}$\emph{)} if and only if an orbit in the direction of $\theta$ of $\omegaksi{n}$ is closed.  Furthermore, a direction $\theta$ is an irrational direction \emph{(}with respect to $\{u_1,u_2\}$\emph{)} if and only if an orbit with the initial direction of $\theta$ in $\omegaksi{n}$ is dense.
\end{corollary}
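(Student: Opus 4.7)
The plan is to derive the corollary directly from Theorem \ref{thm:gutkinsResultAppliedToPrefractals} by transporting the characterization from the geodesic flow on $\sksi{n}$ to the billiard flow on $\omegaksi{n}$ via the dynamical equivalence already discussed in \S\ref{subsec:flatStructuresandFlatSurfaces} and \S\ref{subsec:UnfoldingABilliardOrbit}. Concretely, I would first remark that for any rational polygonal billiard $\Omega(P)$ the standard unfolding construction establishes a conjugacy between the billiard flow on $\Omega(P)$ and the geodesic flow on $\mathcal{S}(P)$, identifying each billiard trajectory (up to the equivalence relation $\sim$ on the unit tangent bundle) with a geodesic on the flat surface whose direction is preserved by the tiling of $\mathcal{S}(P)$ by isometric copies of $\Omega(P)$. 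Applying this conjugacy to the particular case $P=\ksi{n}$, which has already been exploited in \S\ref{ch:TheKochSnowflakePrefractalFlatSurfaceSKSi}, any orbit of $\omegaksi{n}$ in direction $\theta$ lifts to a geodesic on $\sksi{n}$ in that same direction (with respect to the fixed basis $\{u_1,u_2\}$).

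Next I would translate the two dichotomies of Theorem \ref{thm:gutkinsResultAppliedToPrefractals} back down to the billiard. On the flat-surface side a geodesic in a rational direction is either periodic or a saddle connection. Under the unfolding correspondence a periodic geodesic projects to a (nonsingular) periodic billiard orbit, while a saddle connection projects to a singular billiard orbit that begins and terminates at vertices of $\ksi{n}$ after finitely many reflections; in either case the resulting trajectory is closed as a subset of $\omegaksi{n}$. Conversely, a closed orbit of $\omegaksi{n}$ lifts to a geodesic whose closure in $\sksi{n}$ is compact of zero area, hence cannot be dense, and so by Theorem \ref{thm:gutkinsResultAppliedToPrefractals} its direction must be rational. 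This yields the first biconditional.

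For the second biconditional, a direction $\theta$ is irrational if and only if every geodesic in direction $\theta$ on $\sksi{n}$ is dense, by Theorem \ref{thm:gutkinsResultAppliedToPrefractals}. Because the projection $\sksi{n}\to \omegaksi{n}$ arising from the identification $\sksi{n}=(\omegaksi{n}\times D_3)/\sim$ is continuous and surjective, the image of a dense geodesic is a dense orbit in $\omegaksi{n}$. Conversely, if the orbit in direction $\theta$ is dense in $\omegaksi{n}$, then its lift cannot be contained in any proper closed invariant subset of $\sksi{n}$, so $\theta$ cannot be rational by the first part; hence $\theta$ is irrational. Combining the two biconditionals gives the statement of the corollary.

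The only subtle point I anticipate is bookkeeping the distinction between strictly periodic billiard orbits and singular (saddle-connection) orbits, both of which are subsumed under the term \emph{closed} in the corollary, matching the phrase ``periodic or forms a saddle connection'' in Theorem \ref{thm:gutkinsResultAppliedToPrefractals}. Once this convention is made explicit, everything else is a routine application of the dynamical equivalence of the two flows, and the corollary follows immediately.
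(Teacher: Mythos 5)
Your proposal is correct and follows exactly the route the paper takes: the corollary is obtained from Theorem \ref{thm:gutkinsResultAppliedToPrefractals} by invoking the dynamical equivalence between the geodesic flow on $\sksi{n}$ and the billiard flow on $\omegaksi{n}$. The paper states this in a single sentence, while you spell out the transport of both biconditionals in more detail, but the underlying argument is the same.
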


\section{Hybrid orbits of the Koch snowflake prefractal billiard $\omegaksi{n}$}
\label{ch:TheKochSnowflakePrefractalBilliard}


We want to construct sequences of orbits in such a way that one orbit is suitably related to another. More precisely, we want to develop a notion of `compatibility' that relates an orbit of $\omegaksi{n}$ to an orbit of $\omegaksi{n+1}$, for each $n\geq 0$.  Let us first consider an orbit of $\omegaksi{0}$.  Such an orbit has basepoints that potentially lie on segments that are removed as part of the construction of finer prefractal approximations, on the Cantor set that remains as part of the construction process or both.  Obviously, an orbit of $\omegaksi{0}$ may not be an orbit of $\omegaksi{1}$.  However, depending on the types of the ternary representations of the basepoints of the orbit of $\omegaksi{0}$, a sequence of compatible orbits will exhibit particularly interesting dynamical behavior. While certain orbits in a so-called sequence of compatible orbits will form saddle connections in their respective billiard tables, we will see that this is the exception rather than the rule.

A hybrid orbit of a prefractal billiard is an orbit of $\omegaksi{n}$ for which the ternary representation of the elements of the corresponding footprint are such that they never correspond to points of $\ks$ with finite ternary representations (i.e., corners). As we will see, certain orbits remain constant from one prefractal billiard $\omegaksi{n}$ to the next and certain orbits change entirely with each prefractal billiard. The term \textit{hybrid} is meant to indicate that such orbits have qualities that are found in these two types of orbits mentioned in the previous sentence.  As such, Definition \ref{def:hybridOrbit} is phrased so as to include these two types of orbits and more general orbits that have qualities reminiscent of both.


\begin{definition}[Hybrid orbit]
\label{def:hybridOrbit}
Let $\orbitksi{n}$ be an orbit of $\omegaksi{n}$.  If all but at most two basepoints  $\xii{n}{k_n}\in \footprintksi{n}$ have ternary representations (determined with respect to the side $s_{n,\nu}$ on which each point resides) of type $\tern{c}{lr}\vee\tern{cl}{r}\vee\tern{cr}{l}\vee\tern{lcr}{\emptyset}\vee\tern{lr}{\emptyset}$, then we call $\orbitksiang{n}{\theta_n^0}$ a \textit{hybrid orbit} of $\omegaksi{n}$.
\end{definition}

\begin{definition}[A $\mathscr{P}$ hybrid orbit]
If $\orbitksi{n}$ is a hybrid orbit with property $\mathscr{P}$, then we say that it is a $\mathscr{P}$ hybrid orbit.
\end{definition}




\begin{proposition}
\label{prop:denseOrbitIsADenseHybridOrbit}
If $\orbitksi{n}$ is a dense orbit of $\omegaksi{n}$, then $\orbitksi{n}$ is a dense hybrid orbit.
\end{proposition}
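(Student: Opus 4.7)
The plan is to combine Corollary~\ref{cor:gutkinsResultAppliedToPrefractals} with an unfolding and rationality argument on the orbit.

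First, I would invoke Corollary~\ref{cor:gutkinsResultAppliedToPrefractals} to deduce that a dense orbit $\orbitksi{n}$ must have an irrational initial direction $\theta_n^0$ with respect to the basis $\{u_1,u_2\}=\{(1,0),(1/2,\sqrt{3}/2)\}$. By the dynamical equivalence between the billiard flow on $\omegaksi{n}$ and the geodesic flow on the branched cover $\sksi{n}$ of the hexagonal torus $\sksi{0}$ (Theorem~\ref{thm:ksnBranchedCoverOfks0}), the orbit unfolds to a straight line $L$ of irrational slope on the universal cover.

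Next, the plan is to characterize the ``excluded'' ternary types geometrically. Inspecting the allowed list $\tern{c}{lr}\vee\tern{cl}{r}\vee\tern{cr}{l}\vee\tern{lcr}{\emptyset}\vee\tern{lr}{\emptyset}$, one sees that a type is allowed if and only if both $l$ and $r$ appear at some point in the ternary representation (whether infinitely or only finitely often). Hence the excluded types are exactly those whose representation omits $l$ entirely or omits $r$ entirely. The most important examples are the positions $p/3^{k-n}$ of corners of $\omegaksi{k}$ for $k>n$ lying on a side $s_{n,\nu}$, which under the Cantor convention have eventually-constant representations of type $\tern{l}{r}$, $\tern{r}{l}$, $\tern{l}{\emptyset}$ or $\tern{r}{\emptyset}$; these are ``triadic-rational'' positions on $s_{n,\nu}$.

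The central step would be a rationality/collinearity argument on the unfolded line $L$. Suppose, for contradiction, that three distinct basepoints $\xii{n}{k_1},\xii{n}{k_2},\xii{n}{k_3}\in\footprintksi{n}$ were of excluded ternary type. After unfolding and projecting through $p_n$ to $\sksi{0}$, their images lie at points whose coordinates, modulo the hexagonal lattice generated by $\{u_1,u_2\}$, are triadic-rational (i.e., lie in $\frac{1}{3^k}(\mathbb{Z}u_1+\mathbb{Z}u_2)$ for some $k\ge 0$). Any three collinear such points force the direction of $L$ to lie in $\mathbb{Q}u_1+\mathbb{Q}u_2$, i.e., to be rational in the sense of Corollary~\ref{cor:gutkinsResultAppliedToPrefractals}, contradicting the irrationality of $\theta_n^0$. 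Hence at most two basepoints are of excluded type, so $\orbitksi{n}$ is a hybrid orbit, and combined with density, a dense hybrid orbit.

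The main obstacle is verifying that \emph{every} excluded type---not only the countable family of triadic-rational positions---forces the corresponding coordinate on $\sksi{0}$ to lie in a triadic-rational sub-lattice. Excluded types with a priori uncountable support, such as $\tern{lc}{\emptyset}$ and $\tern{rc}{\emptyset}$, are the delicate case; here the argument must appeal to the minimality of the irrational-direction geodesic flow on the branched cover $\sksi{n}$ (and to the structure of the first-return map to each side $s_{n,\nu}$) to rule out more than finitely many such basepoints on a dense orbit.
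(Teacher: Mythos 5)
Your central step is, in substance, the paper's own proof: a basepoint whose ternary representation is eventually constant sits at a triadic-rational position on its side, hence at a vertex of the scale-$N$ triangular lattice for some $N\geq n$; two such basepoints unfold to two points of $\frac{1}{3^{N}}(\mathbb{Z}u_1+\mathbb{Z}u_2)$, so the unfolded line has rational direction with respect to $\{u_1,u_2\}$, the projected geodesic on the hexagonal torus is at best a saddle connection, and the orbit cannot be dense --- a contradiction. The paper runs this with two offending basepoints (thereby showing at most \emph{one} basepoint is of excluded type, which is more than the definition requires); your version with three collinear points is the same computation, since any two of the three already force rationality. One cosmetic caveat: you need the actual unfolded positions to be triadic-rational, not merely their residues modulo the lattice, but this is automatic from the unfolding construction.

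Where you go astray is in reading Definition \ref{def:hybridOrbit}, and this is what creates the (self-acknowledged) gap in your last paragraph. Your criterion ``allowed if and only if both $l$ and $r$ appear somewhere'' is not equivalent to membership in the list $\tern{c}{lr}\vee\tern{cl}{r}\vee\tern{cr}{l}\vee\tern{lcr}{\emptyset}\vee\tern{lr}{\emptyset}$, and it is not even consistent with your own next sentence, where you place $\tern{l}{r}$ and $\tern{r}{l}$ --- representations containing both letters --- among the excluded types. The types the definition is meant to exclude are precisely the eventually constant ones $\tern{l}{cr}\vee\tern{r}{lc}$ (i.e., the triadic-rational positions), together with actual corners; types such as $\tern{lc}{\emptyset}$ or $\tern{rc}{\emptyset}$ are meant to fall under $\tern{cl}{r}$ and $\tern{cr}{l}$, as one can infer from the paper's usage elsewhere (Theorem \ref{thm:SufficientConditionForCantorOrbit} and Example \ref{exa:AConstantSequenceOfCompatiblePeriodicHybridOrbits} treat orbits whose footprints have type $\tern{lr}{c}$ as hybrid, even though that type is not literally on the list either). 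So the ``delicate case'' you flag does not arise --- and it is just as well, because the repair you sketch would not work: the footprint of a dense orbit is a countable dense subset of the boundary, and neither density nor minimality of the flow gives any upper bound on how many times such a set can meet the measure-zero Cantor set of points whose representation omits $r$ (or omits $l$); only the triadic-rational points carry the arithmetic rigidity your collinearity argument exploits. Once the excluded set is correctly identified with the eventually constant representations, your argument is complete and coincides with the paper's.
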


\begin{proof}
Suppose there were two basepoints $\xii{n}{k_n}$ and $\xii{n}{k'_n}$ of a dense orbit $\orbitksi{n}$ with ternary representations of types $\tern{l}{cr}\vee\tern{r}{lc}$.  Then, there exists $N\geq n$ such that the orbit connects two vertices of two equilateral triangles of scale $N$ tiling $\omegaksi{n}$.  Since this orbit can be unfolded (much as in \S\ref{subsec:UnfoldingABilliardOrbit}) into the corresponding flat surface and then projected down onto the hexagonal torus, such an orbit (or flow line on the flat surface) must be at least a saddle connection of the equilateral triangle billiard.  However, such a direction $\theta_n^0$ should yield a dense billiard flow in $\omegaksi{0}$, which is not the case.  Hence, $\xii{n}{k_n}$ and $\xii{n}{k'_n}$ do not both have a ternary representation of type $\tern{l}{cr}\vee\tern{r}{lc}$.  Moreover, if any basepoint of $\orbitksi{n}$ already corresponds to a corner of $\omegaksi{n}$, then a similar argument shows that no other basepoint may have a ternary representation of type $\tern{l}{cr}\vee\tern{r}{lc}$.  Therefore, $\orbitksi{n}$ is a dense hybrid orbit.
\end{proof}




We will construct sequences of suitably related orbits with the intention of examining the limiting behavior of such sequences.  This will be the focus of the latter part of this section and of \S\ref{sec:nontrivialPolygonalPaths}.  To such end, we define a \textit{sequence of compatible initial conditions} below.

\begin{definition}[Compatible initial conditions]
\label{def:compatibleInitialConditions}
Without loss of generality, suppose that $n$ and $m$ are nonnegative integers such that $n > m$. Let $(\xio{n},\theta_n^0)\in (\ksi{n}\times S^1)/\sim$ and $(\xio{m},\theta_m^0)\in (\ksi{m}\times S^1)/\sim$ be two initial conditions of the orbits $\orbitksi{n}$ and $\orbitksi{m}$, respectively, where we are assuming $\theta_n^0$ and $\theta_m^0$ are both inward pointing.  If $\theta_n^0 = \theta_m^0$ and if $\xio{n}$ and $\xio{m}$ lie on a segment determined from $\theta_n^0$ (or $\theta_m^0$) that intersects $\ksi{n}$ only at $\xio{n}$, then we say $(\xio{n},\theta_n^0)$ and $(\xio{m},\theta_m^0)$ are \textit{compatible initial conditions}.
\end{definition}

\begin{remark}
When two initial conditions $(\xio{n},\theta_n^0)$ and $(\xio{m},\theta_m^0)$ are compatible, then we simply write each as $(\xio{n},\theta^0)$ and $(\xio{m},\theta^0)$.
\end{remark}

Not every orbit must pass through the region of $\omegaksi{n}$ corresponding to the interior of $\omegaksi{0}$, let alone pass through the interior of $\omegaksi{m}$, for any $m<n$.  Because of this, it may be the case that an initial condition $(\xio{n},\theta^0)$ is not compatible with $(\xio{m},\theta^0)$, for any $m<n$.

\begin{definition}[Sequence of compatible initial conditions]
\label{def:sequenceOfCompatibleInitialConditions}
Let $\{(\xio{i},\theta_i^0)\}_{i=N}^\infty$ be a sequence of initial conditions, for some integer $N\geq 0$.  We say that this sequence is a \textit{sequence of compatible initial conditions} if for every $m\geq N$ and for every $n> m$, we have that $(\xio{n},\theta_n^0)$ and $(\xio{m},\theta_m^0)$ are compatible initial conditions.  In such a case, we then write the sequence as $\{(\xio{i},\theta^0)\}_{i=N}^\infty$.
\end{definition}

\begin{definition}[Sequence of compatible orbits]
\label{def:SequenceOfCompatibleOrbits}
Consider a sequence of compatible initial conditions $\{(\xio{n},\theta^0)\}_{n=N}^\infty$.  Then the corresponding sequence of orbits $\compseqi{N}$ is called \textit{a sequence of compatible orbits}.
\end{definition}

If $\orbitksixang{m}{\yio{m}}{\theta(\varpi_m^0)}$ is an orbit of $\omegaksi{m}$, then $\orbitksixang{m}{\yio{m}}{\theta(\varpi_m^0)}$ is a member of a sequence of compatible orbits $\compseqiang{N}{\varpi^}$ for some $N\geq 0$. It is clear from the definition of a sequence of compatible orbits that such a sequence is determined by the first orbit $\orbitksiang{N}{\varpi^0}$.  Since the initial condition of an orbit determines the orbit, we can say without any ambiguity that a sequence of compatible orbits is determined by an initial condition $(\xio{N},\varpi^0)$.

\begin{definition}[A sequence of compatible $\mathcal{P}$ orbits]
Let $\mathcal{P}$ be a property (resp., $\mathcal{P}_1,...,\mathcal{P}_j$ a list of properties).  If every orbit in a sequence of compatible orbits has the property $\mathcal{P}$ (resp., a list of properties $\mathcal{P}_1,...,\mathcal{P}_j$), then we call such a sequence \textit{a sequence of compatible $\mathcal{P}$ (resp., $\mathcal{P}_1,...,\mathcal{P}_j$)  orbits}.
\end{definition}



We know that for each fixed billiard table $\omegaksi{n}$ and fixed direction $\theta_n^0$, an orbit is either closed or dense, regardless of the initial basepoint $\xio{n}$.  Applying the results in \S\S\ref{sec:SKSnAsABranchedCover} and \ref{subsec:minimalityOfTheFlowOnOmegaKSnAndConsequences}, we have the following.

\begin{theorem}[A topological dichotomy for sequences of compatible orbits]
\label{thm:ATopologicalDichotomy}
Let $\compseqi{N}$ be a sequence of compatible orbits.  Then $\compseqi{N}$ is either entirely comprised of closed orbits or is entirely comprised of dense hybrid orbits.
\end{theorem}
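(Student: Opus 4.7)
The plan is to exploit the fact that, by Definition \ref{def:compatibleInitialConditions} and Definition \ref{def:sequenceOfCompatibleInitialConditions}, all orbits in the sequence $\compseqi{N}$ share a single common initial direction $\theta^0$. This reduces the dichotomy to a per-table statement about orbits of $\omegaksi{n}$ in a fixed direction, which is already settled by Corollary \ref{cor:gutkinsResultAppliedToPrefractals}.

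First, I would record the key consequence of compatibility: since each pair $(\xio{n},\theta_n^0)$ and $(\xio{m},\theta_m^0)$ satisfies $\theta_n^0=\theta_m^0=:\theta^0$, the direction $\theta^0$ is either rational or irrational with respect to the basis $\{u_1,u_2\}=\{(1,0),(1/2,\sqrt{3}/2)\}$, and this dichotomy is independent of $n$. I would then split into the two cases dictated by Corollary \ref{cor:gutkinsResultAppliedToPrefractals}. If $\theta^0$ is rational, then for every $n\geq N$ the orbit $\orbitksiang{n}{\theta^0}$ is closed (i.e., either periodic or a saddle connection), so $\compseqi{N}$ is a sequence of closed orbits. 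If $\theta^0$ is irrational, then for every $n\geq N$ the orbit $\orbitksiang{n}{\theta^0}$ is dense in $\omegaksi{n}$, so the entire sequence consists of dense orbits.

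To upgrade the dense case to the dense \emph{hybrid} case, I would simply invoke Proposition \ref{prop:denseOrbitIsADenseHybridOrbit} termwise: each dense orbit $\orbitksiang{n}{\theta^0}$ is a dense hybrid orbit, so $\compseqi{N}$ is entirely comprised of dense hybrid orbits, as claimed.

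I do not anticipate any real obstacle in the argument, since the substantive content is contained in Corollary \ref{cor:gutkinsResultAppliedToPrefractals} (obtained via the branched covering in Theorem \ref{thm:ksnBranchedCoverOfks0} together with Gutkin's result) and in Proposition \ref{prop:denseOrbitIsADenseHybridOrbit}. The only delicate point worth emphasizing explicitly is that the rational/irrational character of $\theta^0$ is genuinely a property of the sequence, not of individual orbits; this is exactly what Definition \ref{def:sequenceOfCompatibleInitialConditions} guarantees by forcing the direction to be common across all $n\geq N$, and it is what prevents a hybrid situation in which some orbits in the sequence are closed while others are dense.
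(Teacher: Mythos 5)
Your proposal is correct and follows essentially the same route as the paper's proof: both arguments observe that compatibility forces a single common direction $\theta^0$, split on whether $\theta^0$ is rational or irrational with respect to $\{u_1,u_2\}$ via Corollary \ref{cor:gutkinsResultAppliedToPrefractals}, and then upgrade the dense case to dense hybrid by Proposition \ref{prop:denseOrbitIsADenseHybridOrbit}. No gaps to report.
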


\begin{proof}
Let $\compseqi{N}$ be a sequence of compatible orbits.  By construction $\theta^0$ is the same initial direction for every orbit in the sequence of compatible orbits.  Suppose $\theta^0$ is rational with respect to the basis $\{u_1,u_2\}:=\{(1,0),(1/2,\sqrt{3}/2)\}$.  Then, applying Corollary \ref{cor:gutkinsResultAppliedToPrefractals}, for every $n\geq N$, we deduce that the orbit $\orbitksiang{n}{\theta^0}$ is a closed orbit of $\omegaksi{n}$.  Hence, $\compseqi{N}$ is a sequence of compatible orbits for which every orbit in the sequence is closed.

Suppose now that $\theta^0$ is irrational with respect to the basis $\{u_1,u_2\}$.  Then, by Corollary \ref{cor:gutkinsResultAppliedToPrefractals}, for every $n\geq N$, the orbit $\orbitksi{n}$ is a dense orbit of $\omegaksi{n}$.  By Proposition \ref{prop:denseOrbitIsADenseHybridOrbit}, $\orbitksi{n}$ is therefore a dense hybrid orbit of $\omegaksi{n}$.  Hence, $\compseqi{N}$ is a sequence of compatible orbits for which every orbit in the sequence is a dense hybrid orbit.
\end{proof}






\begin{theorem}
\label{thm:hybridOrbitOfKS0ImpliesHybridOrbitOfKSn}
If $\orbitksi{0}$ is a periodic hybrid orbit of $\omegaksi{0}$ with no basepoints corresponding to ternary points, then for every $n\geq 0$, the compatible orbit $\orbitksi{n}$ is a periodic hybrid orbit of $\omegaksi{n}$.
\end{theorem}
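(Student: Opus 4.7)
The plan is to decouple the two claims of the conclusion, handling ``periodic'' via the topological dichotomy and ``hybrid'' by induction on the refinement level $n$.

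\textbf{Stage 1: Periodicity.} Since $\orbitksi{0}$ is periodic, Corollary \ref{cor:gutkinsResultAppliedToPrefractals} forces the shared initial direction $\theta^0$ to be rational with respect to the basis $\{u_1,u_2\}=\{(1,0),(1/2,\sqrt{3}/2)\}$. Every member of the compatible sequence $\compseq$ has this same direction, so Theorem \ref{thm:ATopologicalDichotomy} places the whole sequence in the closed branch of the dichotomy. Hence $\orbitksi{n}$ is periodic for each $n\geq 0$, disposing of the first half of the conclusion in one stroke.

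\textbf{Stage 2: Hybrid property.} I would induct on $n$, with the base case given by hypothesis. For the step $n\to n+1$, I would classify each basepoint $\xii{n+1}{k_{n+1}}\in\footprintksi{n+1}$ by the kind of side of $\omegaksi{n+1}$ on which it lies: either \emph{(a)} a side that is the left- or right-third of a side $s_{n,\nu}$ of $\omegaksi{n}$ (a surviving piece under refinement), or \emph{(b)} a freshly introduced upright of a cell $C_{n+1,\mu}$. In case \emph{(a)} the basepoint is inherited directly from $\orbitksi{n}$, and its ternary representation on the new, shorter side is the one-digit left shift of its representation on $s_{n,\nu}$. Dropping a leading $l$ or $r$ preserves the set of characters that appear infinitely often, so any allowed hybrid type on $s_{n,\nu}$ remains allowed on $s_{n+1,\nu'}$.

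\textbf{Main obstacle.} The real content is in case \emph{(b)}: ruling out corner landings on a freshly introduced upright of $C_{n+1,\mu}$. I would argue by contradiction using the unfolding procedure from \S\ref{subsec:UnfoldingABilliardOrbit} together with the branched covering map $p_n:\sksi{n+1}\to\sksi{0}$ supplied by Theorem \ref{thm:ksnBranchedCoverOfks0}. A corner landing at level $n+1$ (an apex of the cell, or a ternary split of the upright) would unfold to a straight-line geodesic in direction $\theta^0$ which, upon projection to the hexagonal torus $\sksi{0}$, passes through a specific point of the hexagonal lattice; pulling back this relation along the covering map would produce a basepoint of the compatible orbit $\orbitksi{0}$ at a position of finite ternary expansion on a side of $\omegaksi{0}$, contradicting the hypothesis that $\orbitksi{0}$ has no basepoints corresponding to ternary points. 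Hence no such corner landing occurs, $\orbitksi{n+1}$ is hybrid, and iterating the induction yields the statement for every $n\geq 0$. The delicate geometric bookkeeping in this last case---verifying that every possible corner of every new cell, when hit by a trajectory in direction $\theta^0$, really does pull back along the unfolding and the covering $p_n$ to a ternary point on a side of $\omegaksi{0}$---is where I expect the proof's weight to sit.
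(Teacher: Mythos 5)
Your argument rests on the same engine as the paper's own proof: unfolding the orbit across the tiling of $\omegaksi{n}$ by scale-$n$ equilateral triangles, so that the compatible orbits at every level share one and the same straight line, and any corner or ternary-point landing at level $n$ would pull back along that unfolding to a ternary-point basepoint of $\orbitksi{0}$, contradicting the hypothesis. The paper carries this out in a single direct step rather than by induction (and leaves both the periodicity claim and the ``delicate geometric bookkeeping'' you flag equally implicit), so your proposal is essentially the paper's proof with the scaffolding made explicit.
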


\begin{proof}
Since $\omegaksi{n}$ can be tiled by scale $n$ copies of $\omegaksi{0}$, an orbit of $\omegaksi{0}$ can be unfolded in the Koch snowflake prefractal billiard $\omegaksi{n}$, for every $n\geq 0$;  see \S\ref{subsec:UnfoldingABilliardOrbit}. Therefore, each basepoint of the corresponding compatible orbit $\orbitksi{n}$ will have a ternary representation consistent with that described in Definition \ref{def:hybridOrbit}.
\end{proof}

\begin{theorem}[A sequence of compatible periodic hybrid orbits]
Consider a vector $(a,b)$ that is rational with respect to the basis $\{u_1,u_2\}:=\{(1,0),(1/2,\sqrt{3}/2)\}$ and let $\xoo\in I$.  Then, we have the following\emph{:}

\begin{enumerate}
\item{If $a$ and $b$ are both positive integers with $b$ being odd, $\xoo = \frac{r}{4^s}$, for some $r,s\in\mathbb{N}$ with $s\geq 1$, $1\leq r<4^s$ being odd  and $\theta^0 := \arctan{\frac{b\sqrt{3}}{2a+b}}$, then the sequence of compatible closed orbits $\compseq$ is a sequence of compatible periodic hybrid orbits.}

\item{If $a=1/2$, $b$ is a positive odd integer, $\xoo = \frac{r}{2^s}$, for some $r,s\in\mathbb{N}$ with $s\geq 1$, $1\leq r<2^s$ being odd and $\theta^0 := \arctan{\frac{b\sqrt{3}}{2a+b}}$, then the sequence of compatible closed orbits $\compseq$ is a sequence of compatible periodic hybrid orbits.}
\end{enumerate}
\label{thm:bodd}
\end{theorem}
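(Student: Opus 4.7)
The plan is to reduce the statement, via Theorem \ref{thm:hybridOrbitOfKS0ImpliesHybridOrbitOfKSn}, to the single assertion that $\orbitksiang{0}{\theta^0}$ is a periodic hybrid orbit of $\omegaksi{0}$ whose footprint $\footprintksiang{0}{\theta^0}$ contains no corners of $\ksi{0}$. First, I would verify rationality of $\theta^0$: the velocity vector $(2a+b,\,b\sqrt{3})$ decomposes in the basis $\{u_1,u_2\}=\{(1,0),(1/2,\sqrt{3}/2)\}$ as $(2a)\,u_1+(2b)\,u_2$, with rational coefficients in both cases (1) and (2). Corollary \ref{cor:gutkinsResultAppliedToPrefractals} then guarantees that the compatible orbit $\orbitksiang{n}{\theta^0}$ is closed for every $n\geq 0$, so once the reduction on $\omegaksi{0}$ is accomplished, Theorem \ref{thm:hybridOrbitOfKS0ImpliesHybridOrbitOfKSn} will propagate the periodic hybrid property to the entire compatible sequence.

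I would carry out the reduction by unfolding $\orbitksiang{0}{\theta^0}$ (as in \S\ref{subsec:UnfoldingABilliardOrbit}) into a straight line through $(\xoo,0)$ with direction $(2a+b,b\sqrt{3})$ in the plane tiled by the triangular lattice generated by $u_1,u_2$. The basepoints of the footprint correspond bijectively to the intersections of this line with the three families of lattice edges (horizontal, and in the directions $u_2$ and $u_2-u_1$). A short parameterization shows that, after rescaling each host side to $[0,1]$, the successive positions are
\[
\left\{\tfrac{r}{4^s}+\tfrac{na}{b}\right\},\quad \left\{\tfrac{b(m\,4^s-r)}{a\,4^s}\right\},\quad \left\{\tfrac{b(k\,4^s-r)}{(a+b)\,4^s}\right\},
\]
for integers $n,m,k$, in case (1), with $4^s$ replaced by $2^s$ (and $a=\tfrac12$) in case (2).

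The key number-theoretic step is that each of these positions, when reduced to lowest terms, has a denominator that retains the full factor of $4^s$ (respectively, $2^s$) coming from $\xoo$. Indeed, reducing the numerator of each of the three fractions modulo $4^s$ (resp.\ $2^s$) yields $\pm rb$, which is odd since $r$ and $b$ are both odd by hypothesis; hence $\gcd(\text{numerator},4^s)=1$ (resp.\ $\gcd(\text{numerator},2^s)=1$). Since $s\geq 1$, the reduced denominator is therefore never a power of $3$, so no basepoint of $\orbitksiang{0}{\theta^0}$ is a corner of $\ksi{0}$; in particular, the closed orbit is a genuine periodic orbit rather than a saddle connection.

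The final, and most delicate, task is to verify that each basepoint's ternary representation falls into one of the allowed hybrid types $\tern{c}{lr}\vee\tern{cl}{r}\vee\tern{cr}{l}\vee\tern{lcr}{\emptyset}\vee\tern{lr}{\emptyset}$, with at most the two exceptions permitted by Definition \ref{def:hybridOrbit}. The factor-of-$4^s$ structure already excludes an eventual tail of $\overline{l}$'s or $\overline{r}$'s (both of which would force a pure power of $3$ in the denominator), and also excludes an eventual tail of $\overline{c}$'s (which would require a denominator of the form $2\cdot 3^j$; this is impossible when $4^s$ divides the denominator in case (1), and in case (2) it can occur only at the exceptional midpoint $1/2$, which is absorbed into the two-basepoint slack). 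This is the step I expect to be the main obstacle: extracting the precise digit distribution of each purely periodic (or eventually periodic) ternary expansion from the structure of its denominator requires a careful case analysis depending on whether the denominator carries extra factors of $3$ beyond those already identified. Once this classification is carried out, Theorem \ref{thm:hybridOrbitOfKS0ImpliesHybridOrbitOfKSn} yields the theorem's conclusion.
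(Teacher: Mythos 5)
Your proposal is correct and rests on exactly the same mechanism as the paper's proof: the parity clash between the odd product $rb$ and the power of $2$ carried by the denominator of $\xoo$, followed by an appeal to Theorem \ref{thm:hybridOrbitOfKS0ImpliesHybridOrbitOfKSn}. The only real difference is presentational. The paper argues by contradiction: it supposes the unfolded line passes through a point $(m+p/3^k)u_1+(n+q/3^k)u_2$ of the scale-$k$ triangular lattice for some $k\geq 1$, clears denominators, and obtains an equality of an even and an odd integer, thereby excluding ternary points at \emph{every} scale in one stroke. You instead compute the scale-$0$ footprint positions explicitly and show that each, in lowest terms, retains the factor $4^s$ (resp.\ $2^s$) in its denominator; since a power of $2$ with $s\geq 1$ never divides a power of $3$, no basepoint is a ternary point. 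Both computations are the same modular arithmetic in different clothing, and both conclude via Theorem \ref{thm:hybridOrbitOfKS0ImpliesHybridOrbitOfKSn}.

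The step you flag as ``the main obstacle''---classifying the digit distribution of each basepoint's ternary representation---is not actually needed, and the paper does not perform it either. The types excluded by Definition \ref{def:hybridOrbit} are, in intent and in the paper's own usage, precisely the representations terminating in $\overline{l}$ or $\overline{r}$, i.e.\ the points with finite ternary expansions; a tail of $\overline{c}$'s is the \emph{allowed} type $\tern{c}{lr}$, so your concern about denominators of the form $2\cdot 3^j$ is moot, and types such as $\tern{lr}{c}$ are treated as admissible elsewhere in the paper (cf.\ Theorem \ref{thm:SufficientConditionForCantorOrbit} and Example \ref{exa:ASequenceOfCompatiblePeriodicHybridOrbits}). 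Since your denominator computation already shows that no basepoint has a finite ternary expansion, $\orbitksiang{0}{\theta^0}$ is a periodic hybrid orbit of $\omegaksi{0}$ with no ternary-point basepoints, which is exactly the hypothesis of Theorem \ref{thm:hybridOrbitOfKS0ImpliesHybridOrbitOfKSn}. With that observation your argument is complete as written.
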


\begin{proof}
Let $r,s\in \mathbb{N}$, with $s\geq 1$ and $1\leq r < 4^s$, $a$ and $b$ both be positive integers with $b$ being odd and $\xoo = \frac{r}{4^s}$.  Suppose a line starting at $(\xoo, 0)$ with slope $\frac{b\sqrt{3}}{2a+b}$ intersects a point in $\mathbb{R}^2$ that would correspond to a lattice point of a lattice comprised of equilateral triangles at scale $k$. If $m,n,p,q,k\in\mathbb{Z}$, with $k\geq 1$ and $p,q\leq 3^k$, then such a point has the form $(m+p/3^k)u_1 + (n+q/3^k)u_2$.  Then, using the equation for a line in the plane, we find that
\begin{align}
\left(n+\frac{q}{3^k}\right)\frac{\sqrt{3}}{2} &= \frac{b\sqrt{3}}{2a+b}\left(m+\frac{p}{3^k}+\frac{n}{2}+\frac{q}{2\cdot 3^k}-\frac{r}{4^s}\right),\\
\left(\frac{3^kn+q}{3^k}\right)\frac{1}{2} &= \frac{b}{2a+b}\left(\frac{4^s3^km+4^sp+2\cdot 4^{s-1}3^kn + 2\cdot 4^{s-1}q-3^k r}{3^k4^s}\right),\\
2\cdot 4^{s-1}(3^kn+q)(&2a+b) = b(4^s3^km+4^sp+2\cdot 4^{s-1}3^kn + 2\cdot 4^{s-1}q-3^k r).
\label{eqn:evenNotOdd}
\end{align}
Since $b$ and $r$ are odd, the left-hand side of Equation (\ref{eqn:evenNotOdd}) is even, but the right-hand side is not.  Therefore, our assumption that such a point corresponding to a lattice point at scale $k$ laid on the line beginning at $\xoo=r/4^s$ with slope $\frac{b\sqrt{3}}{2a+b}$ was incorrect.  It follows that such a line emanating from $\xoo=r/4^s$  avoids all points in the boundary of $\omegaksi{0}$ having finite ternary representations. By Theorem \ref{thm:hybridOrbitOfKS0ImpliesHybridOrbitOfKSn}, every orbit in the sequence of compatible orbits must therefore be a periodic hybrid orbit, meaning that $\compseq$ is a sequence of compatible periodic hybrid orbits.

If $a=1/2$, $b$ is a positive odd integer, $\xoo=r/2^s$, with $s\geq 1$ and $1\leq r<2^s$, then a similar argument shows that $\compseq$ is a sequence of compatible periodic hybrid orbits. Suppose a line starting at $(\xoo, 0)$ with slope $\frac{b\sqrt{3}}{2a+b}$ intersects a point in $\mathbb{R}^2$ that would correspond to a lattice point of a lattice comprised of equilateral triangles at scale $k$. If $m,n,p,q,k\in\mathbb{Z}$, with $k\geq 1$ and $p,q\leq 3^k$, then such a point has the form $(m+p/3^k)u_1 + (n+q/3^k)u_2$.  Then, using the equation for a line in the plane, we find that
\begin{align}
\left(n+\frac{q}{3^k}\right)\frac{\sqrt{3}}{2} &= \frac{b\sqrt{3}}{2a+b}\left(m+\frac{p}{3^k}+\frac{n}{2}+\frac{q}{2\cdot 3^k}-\frac{r}{2^s}\right),\\
\left(\frac{3^kn+q}{3^k}\right)\frac{1}{2} &= \frac{b}{2a+b}\left(\frac{2^s3^km+2^sp+2^{s-1}3^kn + 2^{s-1}q-3^k r}{3^k2^s}\right),\\
2^{s-1}(3^kn+q)(&2a+b) = b(2^s3^km+2^sp+2^{s-1}3^kn + 2^{s-1}q-3^k r).
\label{eqn:evenNotOddII}
\end{align}
\noindent Since $b$ and $r$ are odd and $a=1/2$, we see that the left-hand side of Equation (\ref{eqn:evenNotOddII}) is even and the right-hand side is not. Therefore, our assumption that such a point corresponding to a lattice point at scale $k$ laid on the line beginning at $\xoo=r/2^s$ with slope $\frac{b\sqrt{3}}{2a+b}$ was incorrect.  It follows that such a line emanating from $\xoo=r/2^s$  avoids all points in the boundary of $\omegaksi{0}$ having finite ternary representations. By Theorem \ref{thm:hybridOrbitOfKS0ImpliesHybridOrbitOfKSn}, every orbit in the sequence of compatible orbits must therefore  be a periodic hybrid orbit, meaning that $\compseq$ is a sequence of compatible periodic hybrid orbits.
\end{proof}

\begin{example}[A sequence of compatible periodic hybrid orbits]
\label{exa:ASequenceOfCompatiblePeriodicHybridOrbits}
In Figure \ref{fig:hybrid012}, three periodic hybrid orbits are displayed.  These three orbits constitute the first three terms in a sequence of compatible periodic hybrid orbits.  If we choose $\xoo=\overline{c}\in I$ and $\theta_0^0$ to be an angle such that $\xoo$ connects with the midpoint of the lower one-third interval on the side of $\omegaksi{0}$, we can see that $\orbitksi{0}$ is a periodic hybrid orbit.  More importantly, there are elements of the footprint $\footprintksi{0}$ with ternary representations of type $\tern{lr}{c}$.  This observation is key for constructing what we call nontrivial polygonal paths of $\omegaks$, a topic which is discussed in more detail in \nolinebreak\S\ref{sec:nontrivialPolygonalPaths}.

\begin{figure}
\begin{center}
\includegraphics[scale=.25]{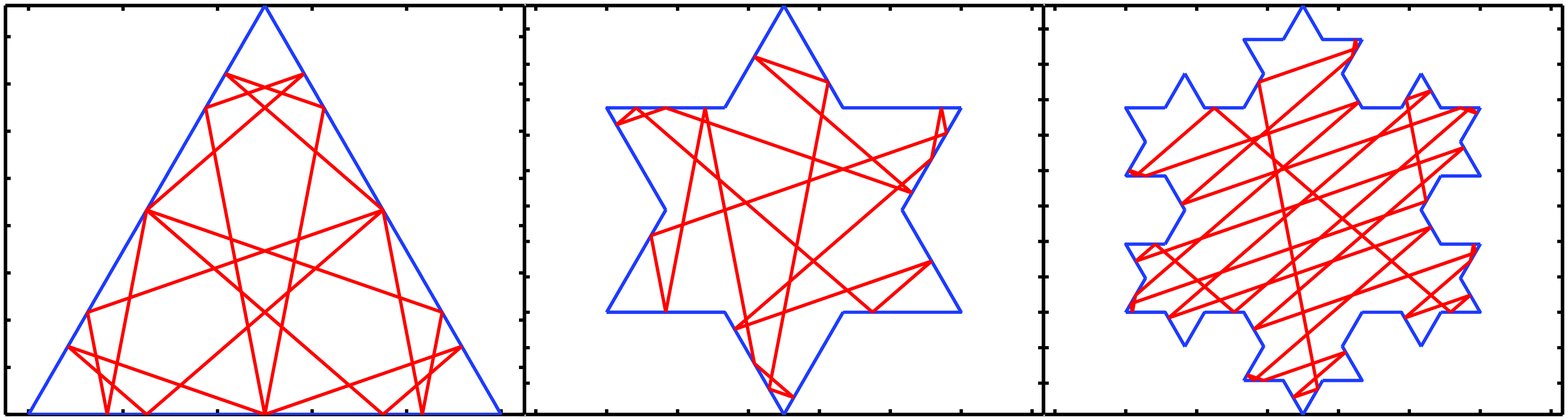}
\end{center}
\caption{Three examples of periodic hybrid orbits.  These are the first three elements of the sequence of compatible periodic hybrid orbits described in Example \ref{exa:ASequenceOfCompatiblePeriodicHybridOrbits}.}
\label{fig:hybrid012}
\end{figure}

\end{example}



\begin{example}[A sequence of compatible hook orbits]
\label{exa:ASequenceOfCompatibleHookOrbits}
Let $\xoo\in I$ have a ternary representation given by $\overline{rl}$, which is a Cantor-point of $\ks$ (in the sense of \S\ref{subsec:TheKochSnowflake}). Such a point has a value of $3/4$.  Considering an orbit of $\omegaksi{0}$ with an initial direction of $\pi/6$, the ternary representation of the basepoints at which the billiard ball path forms right angles with the sides of $\omegaksi{0}$ is of the type $\tern{c}{lr}$. This is a degenerate periodic hybrid orbit, meaning that it doubles back on itself, and the next orbit in the sequence of compatible periodic hybrid orbits has the initial condition $(\xio{1},\pi/6) = (\xoo,\pi/6)$.  Since the ternary representation of the basepoint of $f_0(\xoo,\pi/6)$ is $r\overline{c}$ and $\theta_0^0=\theta_1^0=\pi/6$, it follows that the basepoint of $f_1(\xio{1},\pi/6)$ is a Cantor-point.  Then, still in the notation introduced in Remark \ref{idx:billiardMapKSn}, the basepoint of $f^2_1(\xio{1},\pi/6)$ has a ternary representation of type $\tern{c}{lr}$.  This same pattern is repeated for every subsequent orbit in the sequence of compatible orbits.  As a result, the sequence of compatible orbits forms a sequence of orbits that is converging to a set that is well defined. That is, such a set will be some path with finite length that is effectively determined by the law of reflection in each prefractal approximation.

Such orbits are referred to as \textit{hook orbits} for the fact that they appear to be ``hooking'' into the Koch snowflake; see Figure \ref{fig:hybridpi6lv5}.
\end{example}

\begin{figure}
\begin{center}
\includegraphics[scale=0.3]{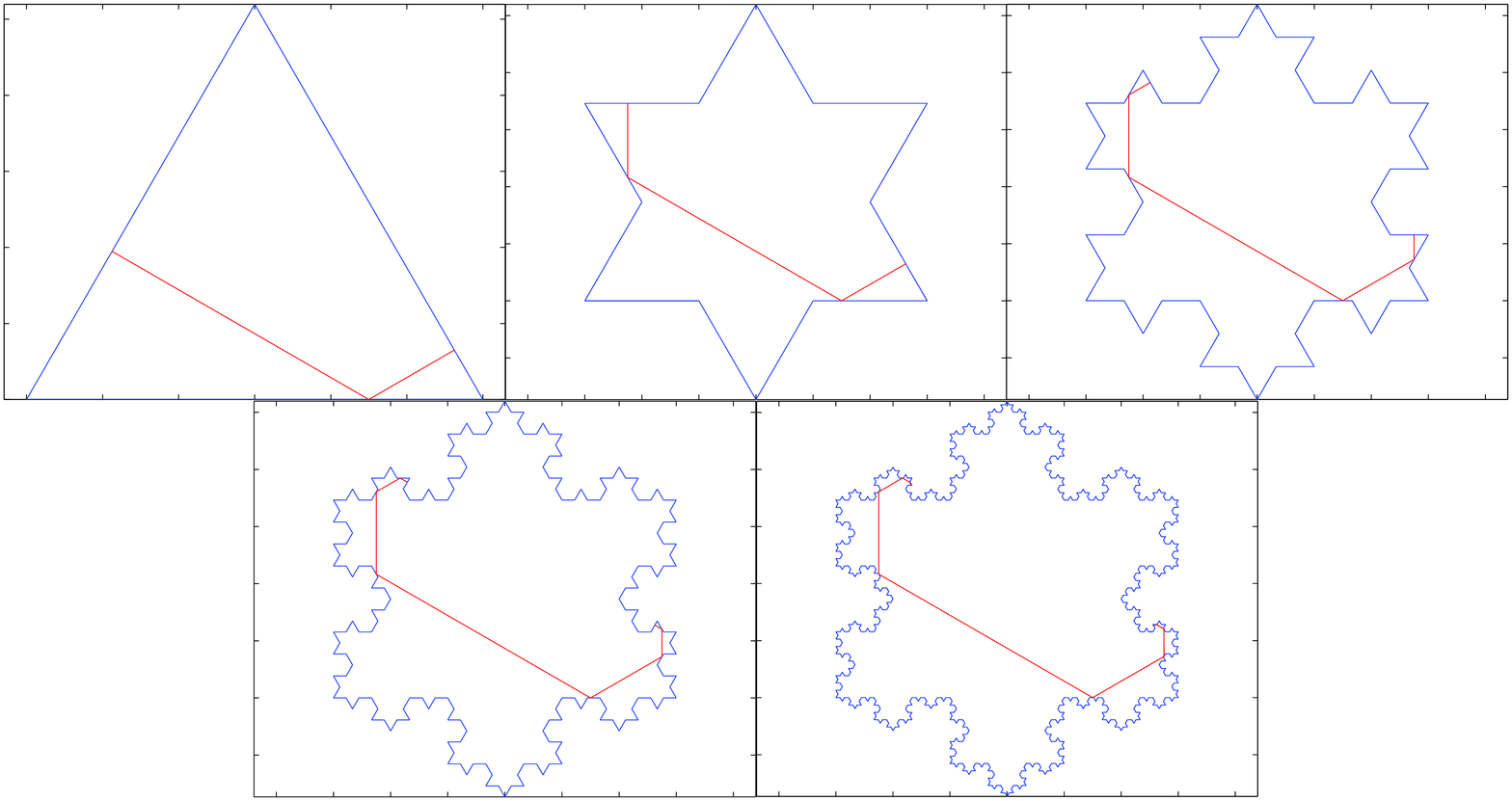}
\end{center}
\caption{An example of a hook orbit.  The same initial condition is used in each prefractal billiard.}
\label{fig:hybridpi6lv5}
\end{figure}

\begin{theorem}
\label{thm:SufficientConditionForCantorOrbit}
If every element $\xii{0}{k_0}\in \footprintksi{0}$ has a ternary representation of type $\tern{lr}{c}$, then there exists $N\geq 0$ such that $\compseqi{N}$ is a constant sequence of compatible periodic hybrid orbits.
\end{theorem}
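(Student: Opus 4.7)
The plan is to reduce to the periodic case, then show that the compatible basepoints stabilize to Cantor-points at some finite level $N$, after which the monotonicity $\omegaksi{N}\subseteq\omegaksi{n}$ forces the orbits to coincide.

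First, I would rule out density of $\orbitksi{0}$. A representation of type $[lr,c]$ contains infinitely many $l$'s and $r$'s, so if $\orbitksi{0}$ were dense then by Corollary \ref{cor:gutkinsResultAppliedToPrefractals} the direction $\theta^0$ would be irrational and the footprint $\footprintksi{0}$ dense in $\ksi{0}$; in particular, it would contain basepoints with representations not of type $[lr,c]$ (e.g., of type $[c,lr]$), contradicting the hypothesis. Thus $\theta^0$ is rational, $\orbitksi{0}$ is closed, and the footprint $\{x_0^{k_0}\}_{k_0=0}^{p-1}$ is finite. Let $\alpha^{(k_0)}=\alpha^{(k_0)}_1\alpha^{(k_0)}_2\cdots$ denote the ternary representation of $x_0^{k_0}$, let $M_{k_0}$ be the position of its last $c$, and set $N:=\max_{k_0} M_{k_0}$.

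The heart of the argument is showing that every basepoint of the compatible orbit $\orbitksi{N}$ has ternary representation of type $[lr,\emptyset]$ on its side of $\omegaksi{N}$, i.e., is a Cantor-point of $\ksi{N}$. Fix a basepoint $x_0^{k_0}$ and let $j_1<\cdots<j_q=M_{k_0}$ enumerate the positions of $c$ in $\alpha^{(k_0)}$. At stage $n=j_1$ the middle third containing $x_0^{k_0}$ is replaced by a new equilateral cell; compatibility then places the compatible basepoint at the first intersection of the line through $x_0^{k_0}$ in direction $-\theta^0$ with one of the two non-base sides of that cell. Because $\theta^0$ is rational and the cell is equilateral, unfolding the reflection (\S\ref{subsec:UnfoldingABilliardOrbit}) identifies the intersection's position on its side with the position of $x_0^{k_0}$ on the cell's base, via an isometry of $[0,1]$ that fixes $c$ and at worst swaps $l$ and $r$. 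The ternary representation on the child side is then the shifted tail $\alpha^{(k_0)}_{j_1+1}\alpha^{(k_0)}_{j_1+2}\cdots$ up to that involution, which has one fewer $c$ than $\alpha^{(k_0)}$. Iterating at $j_2,\ldots,j_q$ consumes the remaining $c$'s one at a time, so that the representation of the compatible basepoint on its side of $\omegaksi{N}$ is the Cantor-like tail $\alpha^{(k_0)}_{M_{k_0}+1}\alpha^{(k_0)}_{M_{k_0}+2}\cdots\in\{l,r\}^{\mathbb{N}}$. Since the original type was $[lr,c]$, both $l$ and $r$ appear infinitely often in this tail, making the new representation of type $[lr,\emptyset]$.

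Cantor-points on sides of $\omegaksi{N}$ remain on $\ksi{n}$ for every $n\geq N$, while the straight segments of $\orbitksi{N}$ lie in $\omegaksi{N}\subseteq\omegaksi{n}$ and the cells added in passing from $\omegaksi{N}$ to $\omegaksi{n}$ point outward and so cannot be met by any segment. Hence $\orbitksi{N}$ is literally an orbit of $\omegaksi{n}$ for every $n\geq N$, proving $\compseqi{N}$ is constant; Theorem \ref{thm:ATopologicalDichotomy}, combined with the type condition just verified, identifies each element as a periodic hybrid orbit. The main obstacle is the middle paragraph: verifying that, for each rational direction $\theta^0$ compatible with the hypothesis, the unfolded compatibility map between a parent side and its child cell side is indeed an isometry of $[0,1]$ (up to the $l\leftrightarrow r$ involution), and checking that consuming the $c$'s in order does not introduce new $c$'s beyond position $M_{k_0}$.
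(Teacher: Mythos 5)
Your overall strategy is the same as the paper's: rule out density, use the finiteness of the footprint and of the set of positions of $c$'s to find a level $N$ at which every compatible basepoint has become a Cantor-point of type $\tern{lr}{\emptyset}$, and then observe that such an orbit is literally unaffected by all subsequent cell attachments. The paper's own proof is in fact terser than yours: it simply invokes the unfolding of $\orbitksi{0}$ into $\omegaksi{n}$ (as in Theorem \ref{thm:hybridOrbitOfKS0ImpliesHybridOrbitOfKSn}) and asserts that each element of $\footprintksi{n}$ retains type $\tern{lr}{c}$, so that the $c$'s are exhausted by some level $N$.

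However, the mechanism you propose for the key middle step is wrong as stated, and you are right to flag it as the main obstacle. The projection along the direction $\theta^0$ from the base of a cell onto one of its two non-base sides is an affine map whose ratio is $\sin\theta^0/\sin(\theta^0\pm\pi/3)$ (the non-base sides make angles of $\pi/3$ with the base); this equals $1$ only for special directions such as $\theta^0=\pi/3$. For instance, for $\theta^0=\pi/2$ the map is $t\mapsto 2t$ in relative coordinates, which scrambles ternary digits rather than shifting them. So ``an isometry of $[0,1]$ that fixes $c$ and at worst swaps $l$ and $r$'' is false for a general rational direction, and your induction on the positions $j_1<\cdots<j_q$ collapses unless you first show that the hypothesis forces $\theta^0$ into the special class where the ratio is $1$. (It is no accident that the paper's worked example of this phenomenon, Example \ref{exa:AConstantSequenceOfCompatiblePeriodicHybridOrbits}, has $\theta^0=\pi/3$.) Separately, your exclusion of density is a non sequitur: a countable dense footprint need not contain a point of any prescribed type such as $\tern{c}{lr}$ --- indeed the set of points of type $\tern{lr}{c}$ is itself dense in a side. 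The clean route is Proposition \ref{prop:denseOrbitIsADenseHybridOrbit}: a dense orbit has all but at most two basepoints of type $\tern{c}{lr}\vee\tern{cl}{r}\vee\tern{cr}{l}\vee\tern{lcr}{\emptyset}\vee\tern{lr}{\emptyset}$, and $\tern{lr}{c}$ is not on that list, so the hypothesis forces the orbit to be closed.
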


\begin{proof}
Recall that the orbit $\orbitksi{0}$ can be unfolded in the billiard $\omegaksi{n}$.  Each element $\xii{n}{k_n}$ of the footprint $\footprintksi{n}$ of the compatible orbit $\orbitksi{n}$ has a ternary representation of type $\tern{lr}{c}$.  Since there are finitely many $c$'s in such a representation, there exists $N$ such that the ternary representation of every element $\xii{N}{k_N}\in \footprintksi{N}$ (this being the footprint of a compatible orbit $\orbitksi{N}$) is of type $\tern{lr}{\emptyset}$. As a result, the sequence of compatible periodic hybrid orbits $\compseqi{N}$ is constant, since every basepoint of every orbit remains fixed for every subsequent prefractal billiard $\omegaksi{M}$, $M\geq N$.
\end{proof}

\begin{example}[A constant sequence of compatible periodic hybrid orbits]
\label{exa:AConstantSequenceOfCompatiblePeriodicHybridOrbits}
Consider $\xoo=7/12$ in the base of the equilateral triangle. Such a value has a ternary representation of type $\tern{lr}{c}$.  Consider the initial condition $(\xoo,\pi/3)$.  The sequence of compatible orbits $\compseqiang{1}{\pi/3}$ is a constant sequence. This follows from the fact that the ternary representation of $\xio{1}$ is $\overline{rl}$. Moreover, the representation of every basepoint of $\orbitksiang{n}{\pi/3}$ is $\overline{lr}$.  In Figure \ref{fig:CompatibleCantorOrbit7-12}, we show the first three orbits in this (eventually) constant sequence of compatible periodic hybrid orbits.
\end{example}

As of now, the only examples of  constant sequences of compatible nondegenerate periodic hybrid orbits we can provide are those for which the initial direction is $\pi/3$. Of course, one can construct a constant sequence of compatible periodic hybrid orbits, each with an initial direction of $\pi/6$ or $\pi/2$, but such orbits will be degenerate.

\begin{figure}
\begin{center}
\includegraphics[scale=.75]{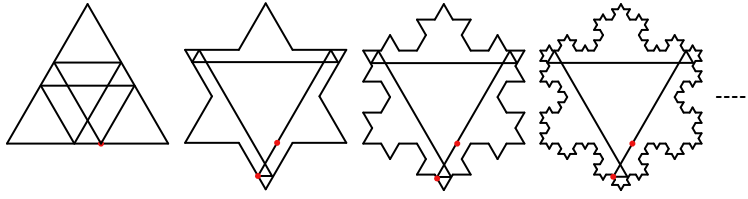}
\end{center}
\caption{An eventually constant sequence of compatible periodic hybrid orbits.  We see that the initial basepoint $\xoo=7/12$ lies on the middle third of the unit interval.  The basepoint $\xio{1}$ of the  compatible initial condition $(\xio{1},\pi/3)$ has a ternary representation of type $\tern{lr}{\emptyset}$.}
\label{fig:CompatibleCantorOrbit7-12}
\end{figure}

\section{Nontrivial polygonal paths of $\omegaks$}
\label{sec:nontrivialPolygonalPaths}

Consider a periodic hybrid orbit of $\omegaksi{0}$.  Each basepoint $\xii{0}{k_0}$ of a footprint $\footprintksi{0}$ has a ternary representation that indicates such a point never corresponds to  a ternary point of a side.  Hence, the unfolding never hits a corner of any prefractal billiard $\omegaksi{n}$.  As a result, we formulate the following conjecture.

\begin{conjecture}
\label{conj:ConvergingToAnElusiveLimitPoint}
If the sequence of basepoints $\xii{0}{k_0}$ is dynamically ordered in such a way that the type of ternary representation alternates between $\tern{c}{lr}\vee\tern{cl}{r}\vee\tern{cr}{l}\vee\tern{lcr}{\emptyset}$ and $\tern{lr}{\emptyset}$, then the corresponding sequence of compatible periodic hybrid orbits yields a sequence of basepoints converging to an elusive limit point of the Koch snowflake $\ks$.
\end{conjecture}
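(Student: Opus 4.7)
The plan is to exploit the self-similar geometry of $\ks$ together with the invariance properties established for compatible orbits in \S\ref{ch:TheKochSnowflakePrefractalBilliard}. I would first separate the basepoints of $\orbitksi{0}$ into two classes under the alternation hypothesis: the Cantor-type basepoints (of type $\tern{lr}{\emptyset}$), which lie on every prefractal approximation $\ksi{n}$ and, by Theorem \ref{thm:hybridOrbitOfKS0ImpliesHybridOrbitOfKSn}, persist unchanged as basepoints of the compatible orbit $\orbitksi{n}$ for all $n\geq 0$; and the middle-third-type basepoints (of type $\tern{c}{lr}\vee\tern{cl}{r}\vee\tern{cr}{l}\vee\tern{lcr}{\emptyset}$), whose ternary expansions contain infinitely many $c$'s, causing them to lie in the interior of some middle third at arbitrarily fine scale and thus to be absent from $\ksi{n}$ for $n$ sufficiently large. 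Writing $\orbitksi{0}$ as this alternating sequence then dictates the combinatorics of the refinement.

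Next, I would track the effect of each refinement $\omegaksi{n}\to\omegaksi{n+1}$ on the footprint. The Cantor-type basepoints are frozen from one level to the next. Between two consecutive Cantor-type basepoints (in dynamical order), the original middle-third-type basepoint at level $0$ is replaced, at level $n$, by a finite chain of new basepoints, each lying in some cell $C_{n',\nu}$ with $n'\leq n$. By Definition \ref{def:cellOfOmegaKSn}, such a cell is an equilateral triangle of scale $n'$, hence of diameter $1/3^{n'}$. A recursive unfolding argument (in the spirit of Theorem \ref{thm:hybridOrbitOfKS0ImpliesHybridOrbitOfKSn}), combined with Corollary \ref{cor:gutkinsResultAppliedToPrefractals} to ensure nondegeneracy in the chosen direction $\theta^0$, should then show that the deepest new basepoint $y_n$ introduced at level $n$ lies inside a cell of scale $1/3^n$ attached to the side carrying $y_{n-1}$, so that the sequence $(y_n)$ descends into progressively finer cells.

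From this control, convergence follows by a Cauchy estimate: $|y_{n+1}-y_n|\leq C/3^n$ for some constant $C$, so $y_n\to p$ for some $p\in\ks$. To see that $p$ is elusive, one argues that $p$ cannot be a vertex of any $\ksi{n}$, since the $y_n$ lie strictly in the interior of successively finer cells and the sequence is never eventually stationary; and $p$ cannot be a Cantor-point, because the address of $p$ in the Koch construction is dictated by the ternary type $\tern{c}{lr}\vee\tern{cl}{r}\vee\tern{cr}{l}\vee\tern{lcr}{\emptyset}$ (with infinitely many $c$'s), whereas a Cantor-point would require the address $\tern{lr}{\emptyset}$. Hence $p\in\ks\setminus\bigcup_n\ksi{n}$, i.e., $p$ is an elusive limit point in the sense of \S\ref{subsec:TheKochSnowflake}.

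The main obstacle will be showing that the alternation pattern assumed at level $0$ is sufficient to force the recursive descent into a new cell at every level. A priori, a middle-third-type basepoint at level $0$ might, after one refinement, become a basepoint of a type that fails to trigger a further deflection at the next level, in which case the sequence $(y_n)$ would stabilize at a Cantor-point rather than accumulate at an elusive limit point. Ruling this out requires a careful symbolic-dynamics analysis of how the billiard map $f_n$ acts on the ternary addresses under refinement, naturally carried out in the unfolded coordinates of \S\ref{subsec:UnfoldingABilliardOrbit}, where the orbit becomes a straight line traversing a self-similar lattice of nested cells. This self-similar replication of the alternation is precisely what should distinguish the orbits of the present conjecture from the hook orbits of Example \ref{exa:ASequenceOfCompatibleHookOrbits}, whose deflection pattern stalls and whose limits therefore accumulate at points with finite address rather than at genuine elusive limit points.
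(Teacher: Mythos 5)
The statement you are addressing is presented in the paper as a \emph{conjecture}, and the paper offers no proof of it: the authors only remark that the asserted behavior is observed for two particular families (the hook orbits of Example \ref{exa:ASequenceOfCompatibleHookOrbits} and the sequence of Example \ref{exa:ASequenceOfCompatiblePeriodicHybridOrbits}), so there is no argument in the paper to compare yours against. On its own terms, your outline captures the geometric picture the authors describe informally in \S\ref{sec:nontrivialPolygonalPaths} --- a chain of basepoints of type $\tern{lr}{\emptyset}$ descending into ever finer cells, with the limit forced off every finite approximation --- but it does not close the conjecture.

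The genuine gap is the one you flag yourself and then leave open: you must show that the alternation of ternary types at level $0$ actually forces, at every level $n$, the appearance of a strictly deeper basepoint $y_n$ inside a cell of scale $n$ attached to the side carrying $y_{n-1}$. Everything downstream of that claim is routine --- the estimate $|y_{n+1}-y_n|\leq C/3^{n}$ gives convergence, and a limit of points lying in the interiors of nested cells of shrinking scale cannot lie on any $\ksi{n}$, hence is elusive. But the descent itself is precisely the content of the conjecture, and you supply no mechanism for it: the proposed symbolic-dynamics analysis of how the billiard map transforms ternary addresses under refinement is announced, not carried out, and without it the sequence could a priori stabilize and fall into the eventually-constant regime of Theorem \ref{thm:SufficientConditionForCantorOrbit}, whose limiting basepoints are Cantor-points rather than elusive limit points. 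There is also a factual error in how you frame the dichotomy: you set aside the hook orbits as the case where the ``deflection pattern stalls'' and the limit has a finite address, whereas the paper asserts the opposite --- the hook orbits are one of the two families for which the conjectured convergence to an elusive limit point is claimed to occur, i.e., they are the paper's principal evidence \emph{for} the conjecture, not an exception to it.
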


We know that this conjecture is true for some family of sequences of compatible periodic hybrid orbits, as evidenced by the fact that a sequence of compatible hook orbits and the sequence of compatible periodic hybrid orbits given in Figure \ref{fig:hybrid012} exhibit such a behavior. In Figure \ref{fig:MedBigBigger}, we show exactly the points referred to in Conjecture \ref{conj:ConvergingToAnElusiveLimitPoint}.  In this particular case, such points are derived from the sequence of compatible periodic hybrid orbits given in Figure \ref{fig:hybrid012}.

\begin{figure}
\begin{center}
\includegraphics[scale=.25]{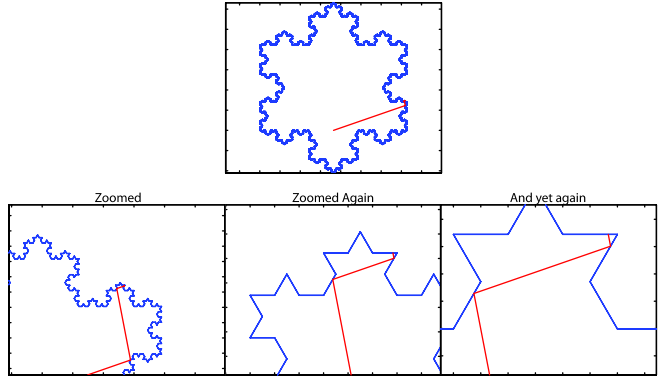}
\end{center}
\caption{A collection of basepoints from successive compatible periodic hybrid orbits converging to an elusive limit point of $\ks$.}
\label{fig:MedBigBigger}
\end{figure}

As we can see from Figure \ref{fig:MedBigBigger}, the sequences of Cantor-points determined from a sequence of compatible periodic hybrid orbits can be connected to form what we call a \textit{nontrivial polygonal path of $\omegaks$}.   Specifically, for every $n\geq 0$, there exists $N\leq n$ and a sequence of basepoints $\{\xii{n}{k_n}\}_{n=0}^N$ such that for every $j<N$, $\xii{j}{k_j}$ has a ternary representation of type $\tern{lr}{\emptyset}$ and $\xii{N}{k_N}$ has a ternary representation of type $\tern{c}{lr}\vee\tern{cl}{r}\vee\tern{cr}{l}\vee\tern{lcr}{\emptyset}$. Then each pair $\{\xii{j}{k_j}, \xii{j+1}{k_{j+1}}\}$, $0\leq j < N$, can be connected to form a line segment and, collectively, the segments form a path.  Then, $\lim_{N\to\infty} \xii{N}{k_N}$ is an elusive limit point of $\ks$ and the collection of segments $\left\{\overline{\xii{j}{k_j}\xii{j+1}{k_{j+1}}}\right\}_{j=0}^\infty$ constitutes a nontrivial polygonal path of $\omegaks$.  We denote such a path by $\mathscr{N}(\xio{0},\theta_0^0)$.

We next show how to construct  two nontrivial polygonal paths that will connect two elusive limit points of the Koch snowflake $\ks$.  Consider a sequence of compatible periodic hybrid orbits $\compseq$ that determines a nontrivial polygonal path.  Let $\overline{\theta^0} = \theta^0 +\pi$ be the angle made by a vector based at $\xii{0}{1}$ when measured relative to the fixed coordinate system and define $\overline{\xoo}:=\xii{0}{1}$.  Then the sequence of compatible hybrid periodic orbits $\compseqbar$ determines a nontrivial polygonal path of $\ks$.  Denoting the nontrivial polygonal paths of $\compseq$ and $\compseqbar$ by $\mathscr{N}(\xoo,\theta^0)$ and $\mathscr{N}(\overline{\xoo},\overline{\theta^0})$, respectively, we see that the concatenation $\mathscr{N}(\xoo,\theta^0)\cup\mathscr{N}(\overline{\xoo},\overline{\theta^0})$ determines a path from one elusive point of $\ks$ to another elusive limit point of $\ks$.

As a result, if one is given the fact that an elusive limit point $x^0\in\ks$ is the limit of a sequence of basepoints constituting the vertices of a nontrivial polygonal path, then one can determine a path from $x^0$ to another elusive limit point $x^1\in\ks$ by following the path determined by the two nontrivial polygonal paths, each being determined by the law of reflection.

\section{Concluding remarks}
\label{sec:concludingRemarks}
We have seen two extremes: sequences of compatible orbits that are (eventually) constant and sequences of compatible orbits with nontrivial limiting behavior. Ultimately, we want to answer the following question:  Can one determine the shortest path between two points of the snowflake subject to a well-defined collision in the boundary?  Finding an answer to such a question amounts to determining a suitable law of reflection in the boundary $\ks$.  As we have seen in the case of a constant sequence of compatible orbits, for certain initial conditions, it is possible to determine a well-defined orbit of the Koch snowflake.  In the case of a sequence of compatible periodic hybrid orbits that determines a nontrivial polygonal path, we have seen a way to connect two elusive limit points via two nontrivial polygonal paths of finite length; cf. \S\ref{sec:nontrivialPolygonalPaths}.  We conjecture that such paths constitute subsets of a well-defined orbit of $\omegaks$.  That is, once a suitable law of reflection is determined, we expect a nontrivial polygonal path determined from a sequence of compatible periodic hybrid orbits to be a subset of the corresponding orbit.

We note that several of the geometric and topological properties of certain possible periodic orbits (or their footprints) of $\omegaks$ are provided in \cite{LapNie2} and \cite{LapNie4}, along with some experimental evidence in support of a ``fractal law of reflection''.

Understanding how a billiard ball reflects off of an elusive limit point is at the heart of the `fractal billiards problem'.  In general, fractal snowflakes constitute the canonical examples of fractal billiards.  One may perform a similar analysis with similar results for the square snowflake.  While not the prototypical examples of a fractal billiard, the $T$-fractal and a Sierpinski carpet billiard also constitute fractal billiard tables; see Figure \ref{fig:otherFractalBilliardTables}.  Each of these tables contain elusive limit points, and in some ways may be more tractable than a snowflake billiard.  Recent work between the second author and Joe P. Chen in \cite{CheNie} extends the results of \cite{Du-CaTy} with the intention of understanding the billiard dynamics on a self-similar Sierpinski carpet billiard table.  By further understanding the nature of what are called \textit{nontrivial line segments}, one can determine orbits of a self-similar Sierpinski carpet that never intersect any corners or sides of any deleted squares of any prefractal approximation, save for those of the initial unit square.  Of course, the problem is that such orbits do intersect infinitely many elusive limit points of the self-similar Sierpinski carpet, highlighting the core problem of determining dynamics on a fractal billiard table.

Taking a different perspective, the work in progress \cite{LapNie3} seeks to understand the nature of the `\textit{fractal flat surface}' by examining the sequence of Veech groups of prefractal flat surfaces.  By extending the work of \cite{We-Sc}, the authors hope to view the prefractal flat surfaces $\mathcal{S}(\ksi{n})$ as `rhombic origamis', as opposed to the traditional square tiled surfaces found in \cite{We-Sc}.

Approaching the problem of determining dynamics on fractal billiard tables from  various perspectives may eventually bring about a clearer picture of what should constitute a true fractal billiard.  We hope that our work will help lay the foundations for this new subject and spark the interest of other researchers working on related questions and who may provide the additional insights needed for solving these difficult problems.

\begin{figure}
\begin{center}
\includegraphics[scale=.5]{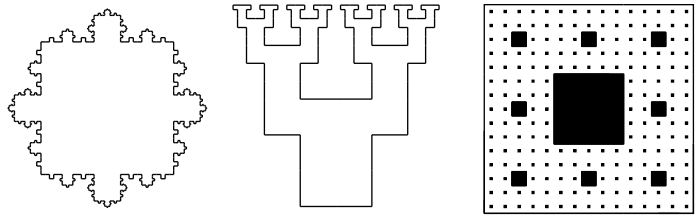}
\end{center}
\caption{The square snowflake, $T$-fractal and a Sierpinski carpet.}
\label{fig:otherFractalBilliardTables}
\end{figure}
\vspace{2 mm}
\begin{center}
\emph{Acknowledgments}
\end{center}

We would like to thank Eugene Gutkin for a helpful discussion about rational billiards and the content of his article  \cite{Gut2}.

\end{document}